\def\NAT@def@citea{\def\@citea{\NAT@separator}}
\newcommand{\R}{\mathbb{R}}  
\newcommand{\N}{\mathbb{N}}  
\DeclareMathOperator*{\argmin}{arg\,min}
\DeclareMathOperator{\tr}{tr}
\theoremstyle{plain}
\newtheorem{theorem}{Theorem}[section]
\newtheorem{proposition}[theorem]{Proposition}
\newtheorem{lemma}[theorem]{Lemma}
\newtheorem{corollary}[theorem]{Corollary}
\theoremstyle{definition}
\newtheorem{definition}[theorem]{Definition}
\newtheorem{example}[theorem]{Example}
\theoremstyle{remark}
\newtheorem{remark}[theorem]{Remark}
\begin{document}


\title{Geometric Mean of Partial Positive Definite Matrices with Missing Entries}

\author{
\name{Hayoung Choi\textsuperscript{a}\thanks{(E-mail: hchoi@shanghaitech.edu.cn;skim@chungbuk.ac.kr;shiym@shanghaitech.edu.cn)},
        Sejong Kim\textsuperscript{b,*}, and
        Yuanming Shi\textsuperscript{a}}
\affil{\textsuperscript{a}School of Information Science and Technology, ShanghaiTech University, Shanghai, China;
\textsuperscript{b}Department of Mathematics, Chungbuk National University, Cheongju, Republic of Korea
\textsuperscript{*}Corresponding author}
}

\maketitle

\begin{abstract}
In this paper the geometric mean of partial positive definite matrices with missing entries is considered.
The weighted geometric mean of two sets of positive matrices is defined, and we show whether such a geometric mean holds certain properties which the weighted geometric mean of two positive definite matrices satisfies. Additionally, counterexamples demonstrate that certain properties do not hold.
A Loewner order on partial Hermitian matrices is also defined.
The known results for the maximum determinant positive completion are developed with an integral representation, and the results are applied to
the weighted geometric mean of two partial positive definite matrices with missing entries. Moreover, a relationship between two positive definite completions is established with respect to their determinants, showing  relationship  between  their  entropy  for  a  zero-mean,multivariate Gaussian distribution.
Computational results as well as one application are shown.
\end{abstract}

\begin{keywords}
Geometric mean, positive definite completions, maximum determinant, entropy, covariance matrix.
\end{keywords}

\section{Introduction}
%
%
%
%

The geometric mean of two positive definite matrices $A$ and $B$ is given by an explicit formula \cite{Ando,PW75}:
\begin{equation}
A \#_{\frac{1}{2}} B = A^{\frac{1}{2}} (A^{-\frac{1}{2}}B A^{-\frac{1}{2}})^{\frac{1}{2}} A^{\frac{1}{2}}.
\end{equation}
This is known as the unique positive definite solution $X$ of the Riccati equation $X A^{-1} X = B$ \cite{KA,LL01}. Moreover, it can be extended to the unique geodesic $t \in [0,1] \mapsto A \#_{t} B = A^{\frac{1}{2}} (A^{-\frac{1}{2}} B A^{-\frac{1}{2}})^{t} A^{\frac{1}{2}}$ connecting from $A$ to $B$ for the Riemannian trace distance on the open convex cone of positive definite matrices \cite{book:Bh,BH06}. This geodesic is called the weighted geometric mean of $A$ and $B$.
A various theories of extending two-variable geometric mean to multi-variable case have been developed: see \cite{ALM,BMP,Han,Kar,LP12}.
A general framework of multivariable operator means containing the multivariable geometric mean as a special case is considered \cite{PALFIA2016951}.
Multi-variable geometric means as well as the two-variable geometric mean of positive definite matrices have been considered as important objects in many pure and applied areas, such as data points in a diverse area of settings \cite{Mo,SH15,AFPA}.

In many research,
there is the potential for missing or incomplete data since data obtained from physical experiments and phenomena are often corrupt or incomplete. The issue with missing data is that nearly all classic and modern statistical and analytical techniques deal with complete data.
It is vital to be able to deal with missing data rather than to delete the incomplete data from the analysis. Over the past twenty years techniques for dealing with missing data in the most appropriate and desirable way possible have been extensively studied in many different fields such as data analysis, statistics, optimization, matrix theory \cite{book:completions,Pigott01,CR09,book:Laurent}.


Covariance matrices are used as features for many signal and image processing applications, including biomedical image segmentation, radar detection, texture analysis, etc.
Recently new geometric approach has been developed for various problems, such as how to measure the distance between two covariance matrices, how to find the average matrix of covariance matrices \cite{HSBB17,AMP17,SBBM17,SHBV17,BR15,Bar13,ABY13,DKZ09,Bar08}.
Especially, in \cite{BB12} Riemannian mean of covariance matrices to space-time adaptive processing is considered.
Recently it becomes more and more important to deal with incomplete covariance matrices in perturbed environment \cite{ZJG16}.
General strategy for completing a partially specified covariance matrix was studied by Dempster \cite{Demp72}.
A zero-mean, multivariate Gaussian distribution on $\R^n$ with density
\begin{equation*}
f(x)=(2\pi)^{-n/2} |\Sigma|^{-1/2} \exp{\bigg\{-\frac{1}{2} x^\top \Sigma^{-1} x\bigg\}}
\end{equation*}
is considered with a partially specified covariance matrix $\Sigma$.
Dempster proposed a completion which maximizes the entropy
\begin{align}
H(f)
&= -\int_{\R^n} \log(f(x))f(x) \mathrm{d}x\\
&= \frac{1}{2}\log(\det{\Sigma})+\frac{1}{2}n(1+ \log{(2\pi))}, \label{e:entropy}
\end{align}
implying that the completion has the maximum determinant. For more information about the maximum determinant and the maximum entropy \cite{FP11,BC82,NN82}.

We consider the geodesic and the geometric mean of two covariance matrices as space-time adaptive processing, additionally with missing entries, i.e., partially specified covariance matrices.
As the process of averaging, the concept of geometric mean of two positive definite matrices with missing entries will play a role to apply a geometric mean to applications in such areas.
In this paper we mainly study the geometric mean of two partial positive definite matrices with missing entries.
After a series of preliminary definitions and known results for a graph, a partial matrix, and the weighted geometric mean of two positive matrices in Section \ref{sec:prelim} that will be used throughout this paper, we consider in Section \ref{sec:mean_subsets} the weighted geometric mean of two subsets of the positive cone. Several meaningful examples for the geometric mean of two subsets are given, and topological properties for it are shown.
Using the geometric mean of two sets of positive definite completions, in Section \ref{sec:mean_partial} we define the geometric mean of two partial positive definite matrices and show that it holds
several of the known properties for the geometric mean of two positive definite matrices. In Section \ref{sec:Loewner}, we define a partial Loewner order for partial Hermitian matrices and characterize the difference of two partial matrices. In Section \ref{sec:maxdet}, the known results for a positive definite completion of maximizing determinant are developed with an integral representation and are applied to the weighted geometric mean of two partial positive definite matrices. Some interesting computational results are found in Section \ref{sec:computation}.

\section{Preliminary}\label{sec:prelim}

\subsection{Graph and Positive Matrix Completion}

In 1981, H. Dym and I. Gohberg studied extensions of band matrices with band inverses \cite{DG81}.
In 1984, R. Grone, C. R. Johnson, E. M. S\'{a}, and H. Wolkowicz considered positive definite completion of partial Hermitian matrices (some entries specified, some missing) \cite{GJSW84}. They showed that if the undirected graph of the specified entries is chordal, a positive definite completion necessarily exists. Johnson, Lundquist, and Naevdal studied positive definite Toeplitz matrix completions in 1997. In \cite{Charles}, they proved that a pattern $P$ of an $(n+1)\times(n+1)$ partial Toeplitz matrix is positive (semi)definite completable if and only if $P=\{k,2k,\ldots,mk\}$
for some $m\in \N$ and $k\in \N$.

Let $V$ be the set of vertices, and let $\{ x, y \}$ denote the edge connecting two points $x, y \in V$.
A \emph{finite undirected graph} is a pair $G=(V,E)$ where the set $V$ of vertices is finite, and the set $E$ of edges is a subset of the set $\{ \{ x, y \}: x, y \in V \}$. In general $E$ may contain \emph{loops} which means that $x=y$. In this paper we assume that the graph always has all loops.
Without loss of generality we assume that $V = \{ 1, 2, \ldots, n \}$.

Define a \emph{$G$-partial matrix} as a set of complex numbers, denoted by
$[a_{ij}]_{G}$ or $A(G)$, where $a_{ij}$ is specified if and only if $\{i,j\}\in E$.
A \emph{completion} of $A(G)=[a_{ij}]_{G}$ is an $n\times n$ matrix $M=[m_{ij}]$ which satisfies $m_{ij}=a_{ij}$ for all $\{i,j\}\in E$. We say that $M$ is a \emph{positive (semi-)definite completion} of $A(G)$ if and only if $M$ is a completion of $A(G)$ and $M$ is positive (semi-)definite.
A \emph{clique} is a subset $C\subset V$ having the property that $\{ x, y \} \in E$ for all $x, y \in C$. A \emph{cycle} in $G$ is a sequence of pairwise distinct vertices $\gamma = (v_1, \ldots, v_s)$ having the property that $\{ v_1,v_2 \}, \{ v_2, v_3 \}, \ldots, \{ v_{s-1}, v_s \}, \{ v_s, v_1 \} \in E$, and $s$ is referred to as the \emph{length} of the cycle. A \emph{chord} of the cycle $\gamma$ is an edge $\{ v_i, v_j \} \in E$ where $1 \leq i < j \leq s$, $\{ v_{i}, v_{j} \} \neq \{ v_{1}, v_{s} \}$, and $|i-j| \geq 2$.

Assume $V = \{ 1, \ldots, n \}$, and let $A(G) = [a_{ij}]_G$ be a $G$-partial matrix. We say that $A(G)$ is a \emph{partial positive (semi-)definite}  if
$$ a_{ji} = \bar{a}_{ij} \quad \text{for all } \{i,j\}\in E$$
and for any clique $C$ of $G$, this principal submatrix $[a_{ij}]_{i,j\in C}$ of $A(G)$ is positive (semi-)definite.
The graph $G$ is called \emph{positive (semi-)definite completable} if any $G$-partial positive (semi-)definite matrix has a positive (semi-)definite completion.

The following proposition shows that the terms \textquotedblleft positive definite completable" and \textquotedblleft positive semi-definite completable" coincide \cite{GJSW84}.
\begin{proposition}\label{prop:positvecompletabl}
A graph $G$ is positive definite completable if and only if $G$ is positive semi-definite completable.
\end{proposition}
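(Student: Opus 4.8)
The plan is to prove the two implications separately, both via small diagonal perturbations by multiples of the identity; the only nontrivial ingredient is a compactness argument for one direction. Throughout I use that $G$ has all loops, so every diagonal entry is specified and $A(G) \pm \epsilon I$ is a well-defined $G$-partial matrix whose specified part remains Hermitian.

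For the implication ``positive semi-definite completable $\Rightarrow$ positive definite completable'', I would start from a $G$-partial positive definite matrix $A(G) = [a_{ij}]_G$. Because there are only finitely many cliques $C$ and each principal submatrix $[a_{ij}]_{i,j \in C}$ is strictly positive definite, I would set $\lambda = \min_C \lambda_{\min}\big([a_{ij}]_{i,j \in C}\big) > 0$ and fix any $\epsilon \in (0,\lambda]$. Then every clique submatrix of $A(G) - \epsilon I$ is positive semi-definite, so $A(G) - \epsilon I$ is $G$-partial positive semi-definite and, by hypothesis, admits a positive semi-definite completion $N$. The matrix $N + \epsilon I$ then completes $A(G)$ and satisfies $N + \epsilon I \succeq \epsilon I \succ 0$, giving the desired positive definite completion. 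This direction needs only a single perturbation and no limiting step.

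For the converse ``positive definite completable $\Rightarrow$ positive semi-definite completable'', I would take a $G$-partial positive semi-definite matrix $A(G)$ and consider the perturbations $A(G) + \epsilon I$ for $\epsilon > 0$. Each clique submatrix becomes $[a_{ij}]_{i,j\in C} + \epsilon I$, which is strictly positive definite, so $A(G) + \epsilon I$ is $G$-partial positive definite and, by hypothesis, has a positive definite completion $M_\epsilon$. The key point is then to pass to the limit $\epsilon \to 0^+$: I would use that for any positive semi-definite matrix the off-diagonal entries obey $|(M_\epsilon)_{ij}|^2 \leq (M_\epsilon)_{ii}\,(M_\epsilon)_{jj}$, while the diagonal entries $(M_\epsilon)_{ii} = a_{ii} + \epsilon$ are specified and uniformly bounded for small $\epsilon$. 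Hence the family $\{M_\epsilon\}$ lies in a compact set, so along a sequence $\epsilon_k \to 0$ I can extract $M_{\epsilon_k} \to M$. Since the positive semi-definite cone is closed, $M$ is positive semi-definite, and since each specified entry converges to $a_{ij}$, the limit $M$ is a completion of $A(G)$.

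The main obstacle is precisely this compactness step in the second implication: one must ensure that the \emph{unspecified} entries of the completions $M_\epsilon$ do not blow up as $\epsilon \to 0$, which is exactly what the entrywise bound $|(M_\epsilon)_{ij}| \leq \sqrt{(M_\epsilon)_{ii}(M_\epsilon)_{jj}}$ guarantees once all diagonal entries are specified and bounded. Everything else is routine: preservation of the Hermitian pattern under $\pm \epsilon I$, the strict or weak positivity of the perturbed clique submatrices, and the closedness of the positive semi-definite cone.
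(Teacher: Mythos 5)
Your proof is correct: the downward perturbation $A(G)-\epsilon I$ with $\epsilon \le \min_C \lambda_{\min}$ handles one direction, and the upward perturbation $A(G)+\epsilon I$ together with the entrywise bound $|(M_\epsilon)_{ij}|^2 \le (M_\epsilon)_{ii}(M_\epsilon)_{jj}$ (valid because all diagonal entries are specified) gives the compactness needed to pass to the limit in the other. The paper itself offers no proof of this proposition --- it is quoted from \cite{GJSW84} --- and your argument is essentially the standard perturbation-and-compactness proof given in that reference, so there is nothing to add.
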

From now on, we will henceforth only use the term "completable".
A graph $G$ is \emph{chordal} if there are no minimal cycles of length $\geq 4$. Equivalently, every cycle of length $\geq 4$ has a chord.
This concept characterizes completable graphs \cite{GJSW84}.
\begin{theorem}\label{chordal}
The graph $G$ is completable if and only if $G$ is chordal.
\end{theorem}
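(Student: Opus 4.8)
The plan is to prove the two implications separately: if $G$ is not chordal then $G$ is not completable, and if $G$ is chordal then $G$ is completable. By Proposition \ref{prop:positvecompletabl} it suffices to work with positive definite completions throughout. The structural fact I would invoke is the classical characterization of chordal graphs by a \emph{perfect elimination ordering}: equivalently, every chordal graph on at least two vertices has a \emph{simplicial} vertex $v$ (one whose neighborhood $N(v)$ is a clique), and deleting $v$ leaves a chordal graph. This is where the chordality hypothesis will enter the hard direction.

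For ``not chordal $\Rightarrow$ not completable'' I would first build a bad pattern on a chordless cycle and then transplant it. Suppose $G$ has a minimal cycle $\gamma=(v_1,\dots,v_s)$ of length $s\ge 4$; being chordless, the induced subgraph on $\{v_1,\dots,v_s\}$ is exactly $C_s$. On it I would specify the Hermitian partial matrix with all diagonal entries $1$, with $a_{v_iv_{i+1}}=c$ for $i=1,\dots,s-1$ and $a_{v_sv_1}=-c$, where $c\in(\cos(\pi/s),1)$. Each specified clique block is $\left(\begin{smallmatrix}1&\pm c\\ \pm c&1\end{smallmatrix}\right)$, positive definite since $|c|<1$, so the pattern is genuinely partial positive definite. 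The obstruction is a triangle-inequality argument: any positive definite completion is a Gram matrix of unit vectors $g_1,\dots,g_s$, and the geodesic angles $\phi_{ij}=\arccos\langle g_i,g_j\rangle$ obey the triangle inequality on the unit sphere, forcing $\arccos(-c)=\phi_{v_sv_1}\le\sum_{i=1}^{s-1}\phi_{v_iv_{i+1}}=(s-1)\arccos c$, i.e.\ $\arccos c\ge\pi/s$; this fails exactly when $c>\cos(\pi/s)$. To finish, I would transplant the pattern to all of $G$ by keeping these values on the cycle edges, setting every other diagonal entry to $1$, and every other specified off-diagonal entry to $0$. Each clique of $G$ then either lies in the cycle (a vertex or edge, with positive definite block) or meets the complement (giving a block of the form $A_0[\,\cdot\,]\oplus I$, again positive definite), so the transplant is partial positive definite; and any positive definite completion of it would restrict to a positive definite completion of the $C_s$ pattern on the principal submatrix indexed by $\{v_1,\dots,v_s\}$, which is impossible. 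Hence $G$ is not completable.

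For ``chordal $\Rightarrow$ completable'' I would induct on $n=|V|$, adding back one simplicial vertex at a time; the base case $n=1$ (or $G$ complete, where the partial matrix is already its own completion) is immediate. For the step, let $v$ be simplicial, so $N(v)$ is a clique, and let $G'=G\setminus\{v\}$, which is chordal. Given a $G$-partial positive definite matrix $A$, its restriction to $G'$ is partial positive definite, so by induction it has a positive definite completion $M'$. It remains to append the $v$ row and column: the entries from $v$ to $N(v)$ are specified (they lie in the clique $N(v)\cup\{v\}$), call this vector $u$, while the entries to the remaining vertices are free. I would choose the free entries to minimize $b^{*}M'^{-1}b$ over all $b$ with $b|_{N(v)}=u$. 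The key identity is $\min_{\,b|_{N(v)}=u} b^{*}M'^{-1}b=u^{*}\bigl(M'_{N(v)}\bigr)^{-1}u$, where $M'_{N(v)}=A_{N(v)}$ is the specified clique block; the bordered matrix $\left(\begin{smallmatrix}M'&b\\ b^{*}&a_{vv}\end{smallmatrix}\right)$ is then positive definite precisely when $a_{vv}-u^{*}A_{N(v)}^{-1}u>0$, which is the Schur-complement condition asserting that the clique block on $N(v)\cup\{v\}$ is positive definite --- and that holds by hypothesis. This yields the required completion of $A$ and closes the induction.

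The main obstacle is this completion step, specifically the minimal-norm identity $\min_{b|_{N(v)}=u} b^{*}M'^{-1}b=u^{*}A_{N(v)}^{-1}u$, which is exactly what makes simpliciality of $v$ do its work: it says the worst global constraint coming from $M'$ collapses to the purely local clique constraint already guaranteed by partial positive definiteness. I would verify it by partitioning $V\setminus\{v\}$ into $N(v)$ and the rest, writing the block form of $M'^{-1}$, minimizing the resulting quadratic in the free coordinates, and simplifying with the block-inverse (Schur complement) formulas. The remaining graph-theoretic ingredient --- that a chordal graph always supplies a simplicial vertex whose deletion preserves chordality --- is standard, and everything else is routine linear algebra.
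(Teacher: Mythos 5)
Your proof is correct and complete. Note, however, that the paper does not prove this theorem at all: it states it as a known result and cites \cite{GJSW84}, so there is no in-paper argument to compare against. Your two directions are essentially the classical ones from that reference. For ``not chordal $\Rightarrow$ not completable'' your chordless-cycle obstruction is sound: the $2\times 2$ clique blocks are positive definite for $|c|<1$, the spherical triangle inequality for the Gram vectors (taking real parts to handle the complex case) forces $\arccos c \ge \pi/s$, and the transplant to all of $G$ by zero-padding keeps every clique block of the form ``PD block $\oplus\, I$.'' For ``chordal $\Rightarrow$ completable'' your simplicial-vertex induction with the Schur-complement identity $\min_{b|_{N(v)}=u} b^{*}M'^{-1}b = u^{*}A_{N(v)}^{-1}u$ is a clean and correct way to close the step; the original GJSW argument instead completes one missing entry at a time along a sequence of chordal supergraphs, so your route is a legitimate (and arguably tidier) variant of the standard proof. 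The only ingredients you leave as ``standard'' --- existence of a simplicial vertex in a chordal graph and the block-inverse formula --- are indeed standard, so no gap remains.
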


\begin{example}\label{ex:non-chordal}
Let $G=(V,E)$ be a graph with $V = \{1,2,3,4\}$ and
$$ E = \{ \{1,1\}, \{1,2\}, \{1,4\}, \{2,2\}, \{2,3\}, \{3,3\}, \{3,4\}, \{4,4\} \}. $$

\begin{multicols}{2}
\begin{center}
\begin{tikzpicture}[node distance=1.8cm, every loop/.style={},
                    thick,main node/.style={circle,draw,font=\sffamily\small\bfseries}]

  \node[main node] (1) {1};
  \node[main node] (2) [below of=1] {2};
  \node[main node] (3) [right of=2] {3};
  \node[main node] (4) [above of=3] {4};

  \path[every node/.style={font=\sffamily\small}]
    (1) edge node [left] {} (4)
    (2) edge node [right] {} (1)
    (3) edge node [right] {} (2)
    (4) edge node [above] {} (3);
\end{tikzpicture}
\end{center}

Example \ref{ex:non-chordal}
: Non-chordal graph.

\columnbreak
\begin{center}
\begin{tikzpicture}[auto, node distance=1.8cm, every loop/.style={},
                    thick,main node/.style={circle,draw,font=\sffamily\small\bfseries}]

  \node[main node] (1) {1};
  \node[main node] (2) [below of=1] {2};
  \node[main node] (3) [right of=2] {3};
  \node[main node] (4) [above of=3] {4};

  \path[every node/.style={font=\sffamily\small}]
    (1) edge node [left] {} (4)
    edge node [left] {} (3)
    (2) edge node [right] {} (1)
    (3) edge node [right] {} (2)
    (4) edge node [above] {} (3);
\end{tikzpicture}
\end{center}

Example \ref{completable_example1}
: A chordal graph.
\end{multicols}

Since the graph $G$ is not chordal, by Theorem \ref{chordal} there exists a partial positive definite matrix $A(G)$ which does not have any positive completions. For example,
the following partial positive definite matrix does not have a positive (semi-)definite completion.
\begin{equation*}
N=
\begin{bmatrix}
1 & -1 & ? & 0 \\
-1 & 2 & 2 & ? \\
? & 2 & 3 & 1 \\
0 & ? & 1 & 1
\end{bmatrix}
\end{equation*}
whose missing entries are denoted by $?$.
\end{example}

\begin{example}\label{completable_example1}
Let $G=(V,E)$ be a graph with $V = \{1,2,3,4\}$ and
$$ E = \{ \{1,1\}, \{1,3\}, \{1,4\}, \{2,2\}, \{2,3\}, \{3,3\}, \{3,4\}, \{4,4\} \}. $$

Since the graph $G$ is chordal, by Theorem \ref{chordal} any matrix $A(G)$ has a positive (semi-)definite completion.
That is, the following partial positive definite matrix has a positive definite completion.
\begin{equation*}
A(G)=
\begin{bmatrix}
* & ? & * & * \\
? & * & * & ? \\
* & * & * & * \\
* & ? & * & *
\end{bmatrix}
\end{equation*}
whose missing entries are denoted by $?$ and specified entries are denoted by $*$.
For example, since the $G$-partial matrices
\begin{equation*}
A(G)=
\begin{bmatrix}
1 & ? & 1 & 1 \\
? & 5 & 1 & ? \\
1 & 1 & 3 & 1 \\
1 & ? & 1 & 2
\end{bmatrix}\quad \text{and   }~
B(G)=
\begin{bmatrix}
4 & ? & 2 & -1 \\
? & 3 & 1 & ? \\
2 & 1 & 6 & 1 \\
-1 & ? & 1 & 3
\end{bmatrix}
\end{equation*}
are partial positive definite,
they have positive definite completions.
\end{example}

Let $M_{m \times n} := M_{m \times n}(\mathbb{C})$ be a set of all $m \times n$ matrices with entries in the field $\mathbb{C}$ of complex numbers. We equip on $M_{m \times n}$ with the inner product defined as
\begin{displaymath}
\langle A, B \rangle := \tr (A^{*} B) = \sum_{i,j=1}^{m,n} \overline{a_{ij}} b_{ij},
\end{displaymath}
for $A = [a_{ij}], B = [b_{ij}] \in M_{m \times n}$, where $A^{*} = \bar{A}^{T}$ is a complex conjugate transpose of $A$. The inner product naturally gives us an $l_{2}$ norm, known as the Frobenius norm and Hilbert-Schmidt norm, defined by
\begin{displaymath}
\Vert A \Vert_{2} = [ \tr (A^{*} A) ]^{1/2}.
\end{displaymath}
We simply denote as $M_{n} := M_{n \times n}$. We also denote as $\textrm{GL}_{n}$ the general linear group in $M_{n}$.

\begin{remark} \label{R:compatible}
The operator norm of $A \in M_{n}$ is defined as
\begin{displaymath}
\Vert A \Vert := \underset{\Vert x \Vert_2 = 1}{\max} \Vert A x \Vert_{2}.
\end{displaymath}
Note that
\begin{displaymath}
\Vert A \Vert_{2} = \left[ \sum_{i=1}^{n} \sigma_{i}^{2}(A) \right]^{1/2} \ \textrm{and} \ \Vert A \Vert = \sigma_{1}(A),
\end{displaymath}
where $\sigma_{1}(A) \geq \cdots \geq \sigma_{n}(A)$ are (non-negative) singular values of $A$ in decreasing order. Since $\Vert A \Vert \leq \Vert A \Vert_{2} \leq n \Vert A \Vert_{2}$, two norms $\Vert \cdot \Vert_{2}$ and $\Vert \cdot \Vert$ are compatible.
\end{remark}

Let $\mathbb{H} \subset M_{n}$ be the real vector space of all Hermitian matrices, and let $\mathbb{P} \subset \mathbb{H}$ be the open convex cone of $n \times n$ positive definite matrices. Then the closure $\overline{\mathbb{P}}$ of $\mathbb{P}$ consists of all $n \times n$ positive semi-definite matrices. For any $A, B \in \mathbb{H}$ we denote as $A \leq B$ if and only if $B - A \in \overline{\mathbb{P}}$, and $A < B$ if and only if $B - A \in \mathbb{P}$. This is known as the Loewner partial ordering \cite[Section 7.7]{HJ}.

The Frobenius norm $\Vert \cdot \Vert_{2}$ gives rise to the Riemannian trace metric on $\mathbb{P}$ given by
\begin{equation}\label{eq:distancePD}
\delta(A, B) = \Vert \log (A^{-1/2} B A^{-1/2}) \Vert_{2}
\end{equation}
for any $A, B \in \mathbb{P}$. Then $\mathbb{P}$ is a Cartan-Hadamard manifold, a simply connected complete Riemannian manifold with non-positive sectional curvature. The curve
\begin{displaymath}
[0,1] \ni t \mapsto A \#_{t} B := A^{1/2} (A^{-1/2} B A^{-1/2})^{t} A^{1/2}
\end{displaymath}
is the unique geodesic from $A$ to $B$, called the \emph{weighted geometric mean} of positive definite matrices $A$ and $B$.
Note that $A \# B := A \#_{1/2} B$ is the unique midpoint between $A$ and $B$ for the Riemannian metric. We review several known properties of the weighted geometric mean on the open convex cone $\mathbb{P}$ of positive definite matrices.

\begin{theorem} \label{T:geomean}
The weighted two-variable geometric mean satisfies the following: for any $A$, $B$, $C$, $D \in \mathbb{P}$ and $t \in [0,1]$
\begin{itemize}
\item[(1)] $A \#_{t} B = A^{1-t} B^{t}$ if $A$ and $B$ commute.
\item[(2)] $(a A) \#_{t} (b B) = a^{1-t} b^{t} (A \#_{t} B)$ for any $a, b >0$.
\item[(3)] $A \#_{t} B = B \#_{1-t} A$.
\item[(4)] $A \#_{t} B \leq C \#_{t} D$ whenever $A \leq C$ and $B \leq D$.
\item[(5)] $[0,1] \times \mathbb{P}  \times \mathbb{P} \ni (t, A, B) \mapsto A \#_{t} B \in \mathbb{P}$ is continuous.
\item[(6)] $S^{*} (A \#_{t} B) S = (S^{*} A S) \#_{t} (S^{*} B S)$ for any invertible $S \in \mathrm{GL}_{n}$.
\item[(7)] $[(1 - \lambda) A + \lambda B] \#_{t} [(1 - \lambda) C + \lambda D] \geq (1 - \lambda) (A \#_{t} C) + \lambda (B \#_{t} D)$ for any $\lambda \in [0,1]$.
\item[(8)] $(A \#_{t} B)^{-1} = A^{-1} \#_{t} B^{-1}$.
\item[(9)] $\det (A \#_{t} B) = (\det A)^{1-t} (\det B)^{t}$.
\item[(10)] $[(1-t) A^{-1} + t B^{-1}]^{-1} \leq A \#_{t} B \leq (1-t) A + t B$ for any $t \in [0,1]$.
\end{itemize}
\end{theorem}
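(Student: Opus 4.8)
The plan is to organize the ten assertions by logical dependence, deriving the elementary identities directly from the functional calculus and then using a single congruence lemma to reduce the inequalities to scalar statements. First I would dispatch (1), (2), (8), and (9) by direct computation: writing $M = A^{-1/2} B A^{-1/2}$, each reduces to an elementary property of the functional calculus of $M$ or to scalar algebra. For (1), commutativity gives $A^{-1/2} B A^{-1/2} = A^{-1} B$ with $A^{-1}$ and $B$ commuting, so $M^{t} = A^{-t} B^{t}$; for (2) the positive scalars factor through the square root and the $t$-th power; for (8) one uses $M^{-t} = (M^{-1})^{t}$ with $M^{-1} = A^{1/2} B^{-1} A^{1/2}$; and for (9) one combines $\det(M^{t}) = (\det M)^{t}$ with multiplicativity of the determinant. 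Continuity (5) is likewise routine, following from the joint continuity of $(t, M) \mapsto M^{t}$ and of $A \mapsto A^{\pm 1/2}$ on $\mathbb{P}$ in the operator norm, which by Remark \ref{R:compatible} is compatible with $\Vert \cdot \Vert_{2}$.

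The technical core is the congruence invariance (6), which I would establish first since (4) and (10) reduce to it. Given $S \in \mathrm{GL}_{n}$, I would take the polar decomposition $A^{1/2} S = V (S^{*} A S)^{1/2}$ with $V$ unitary, substitute it into the definition of $(S^{*} A S) \#_{t} (S^{*} B S)$, and check that the inner positive factor transforms by a unitary conjugation $W \mapsto V^{*} W V$, which commutes with the $t$-th power; collecting terms yields $S^{*} (A \#_{t} B) S$. With (6) in hand, the right-hand inequality of (10) follows by taking $S = A^{-1/2}$ to normalize $A$ to the identity: one is reduced to $M^{t} \leq (1-t) I + t M$, which after diagonalizing $M$ is exactly the scalar weighted arithmetic--geometric inequality $\lambda^{t} \leq (1-t) + t \lambda$ for $\lambda > 0$. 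The left-hand (harmonic) inequality of (10) then follows by applying the right-hand inequality to $A^{-1}, B^{-1}$, inverting (which reverses the Loewner order), and invoking the duality (8).

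For the symmetry (3) I would use the identity $A \#_{t} B = A (A^{-1} B)^{t}$, obtained from the definition via $A^{-1/2} M^{t} A^{1/2} = (A^{-1/2} M A^{1/2})^{t} = (A^{-1} B)^{t}$, the power being well defined since $A^{-1} B$ is similar to the positive definite $M$. Then $B \#_{1-t} A = B (B^{-1} A)^{1-t} = B (B^{-1} A)(A^{-1} B)^{t} = A (A^{-1} B)^{t} = A \#_{t} B$, using $(B^{-1} A)^{1-t} = (B^{-1} A)(A^{-1} B)^{t}$; alternatively (3) is immediate from the uniqueness of the geodesic, whose reversal joins $B$ to $A$. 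Joint monotonicity (4) I would then deduce in two steps: monotonicity in the second variable, $A \#_{t} B \leq A \#_{t} D$ for $B \leq D$, reduces by the congruence $S = A^{-1/2}$ to $B'^{t} \leq D'^{t}$, which is the L\"{o}wner--Heinz inequality for the operator monotone function $x^{t}$, $t \in [0,1]$; monotonicity in the first variable follows from this together with the symmetry (3), and the two combine into the joint statement.

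The main obstacle is the joint concavity (7), which does not reduce to a scalar inequality by congruence. Here I would invoke the Kubo--Ando theory: the weighted geometric mean is the operator connection associated with the operator monotone function $f(x) = x^{t}$, and every such connection is jointly concave. For a self-contained argument I would instead use the integral representation $x^{t} = \frac{\sin(t\pi)}{\pi} \int_{0}^{\infty} \frac{x}{x+s}\, s^{t-1}\, ds$ for $t \in (0,1)$, which exhibits $A \#_{t} B$ as a nonnegative superposition of weighted parallel sums of the form $A (sA + B)^{-1} B$; since each such parallel sum is jointly concave and the Loewner order is preserved under integration, (7) follows, with the boundary cases $t \in \{0,1\}$ being trivial. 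This concavity step, rather than any of the direct computations, is where the real work lies.
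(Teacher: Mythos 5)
Your proposal is mathematically sound, but note that the paper does not prove Theorem \ref{T:geomean} at all: it is presented explicitly as a review of known properties of the weighted geometric mean, with the reader referred to the standard literature (e.g.\ Bhatia's book and the Lawson--Lim paper cited in the introduction). So there is no "paper proof" to match; what you have supplied is a self-contained derivation of facts the authors simply import. As such a derivation, your logical architecture is correct and efficient: the direct computations for (1), (2), (8), (9) via $M = A^{-1/2}BA^{-1/2}$ all check out; the polar-decomposition proof of congruence invariance (6) is exactly right (one verifies $(S^{*}AS)^{-1/2}(S^{*}BS)(S^{*}AS)^{-1/2} = V^{*}MV$, and the unitaries cancel); reducing (10) to the scalar inequality $\lambda^{t} \leq (1-t) + t\lambda$ by congruence, and (4) to L\"{o}wner--Heinz plus the symmetry (3), is the standard and cleanest route; and the identity $A \#_{t} B = A(A^{-1}B)^{t}$ gives (3) correctly. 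The only ingredient you lean on without proof is the joint concavity of the map $(A,B) \mapsto A(sA+B)^{-1}B = (A^{-1} + sB^{-1})^{-1}$ in step (7); this is itself a classical fact (provable, e.g., from the Schur-complement representation $(A^{-1}+B^{-1})^{-1} = A - A(A+B)^{-1}A$ or the block-matrix extremal characterization), so citing it is reasonable in context, but a referee asking for full self-containment would want that one lemma spelled out. What your approach buys over the paper's is transparency about which properties are elementary consequences of the functional calculus and which (monotonicity via L\"{o}wner--Heinz, joint concavity via the integral representation of $x^{t}$) genuinely require operator-theoretic input.
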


\begin{remark} \label{R:Riemann}
Item (5) can be described as the map $[0,1] \times \mathbb{P}  \times \mathbb{P} \ni (t, A, B) \mapsto A \#_{t} B \in \mathbb{P}$ is continuous with respect to the Riemannian trace metric $\delta$:
\begin{align*}
\delta(A \#_{s} B, C \#_{t} D)
& \leq \delta(A \#_{s} B, A \#_{t} B) + \delta(A \#_{t} B, C \#_{t} D)\\
& \leq |s-t| \delta(A, B) +  (1-t) \delta(A, C) + t \delta(B, D)
\end{align*}
for any $A, B, C, D \in \mathbb{P}$ and $s,t \in [0,1]$.
\end{remark}

\begin{remark} \label{R:PSD}
One can define the weighted geometric mean for positive semi-definite matrices $A$ and $B$ such as
\begin{equation} \label{E:semi-geomean}
A \#_{t} B := \lim_{\epsilon \to 0^{+}} (A + \epsilon I) \#_{t} (B + \epsilon I).
\end{equation}
By Theorem \ref{T:geomean} (4), $(A + \epsilon I) \#_{t} (B + \epsilon I)$ is monotone decreasing on $\epsilon > 0$ and is bounded below by $O$. So it converges, and thus, the equation \eqref{E:semi-geomean} is well-defined.
\end{remark}

\section{Weighted geometric mean of two subsets of the positive cone}\label{sec:mean_subsets}


In this paper we deal with the following weighted geometric mean of two subsets of $\mathbb{P}$ and see its geometric properties.
\begin{definition} \label{D:geomean}
Let $\mathcal{S} \subset \mathbb{P}$ and $\mathcal{T} \subset \mathbb{P}$, and let $t \in [0,1]$. The weighted geometric mean of two subsets of positive definite matrices is defined by
$$ \mathcal{S} \#_{t} \mathcal{T} := \{ S \#_{t} T ~|~ S \in \mathcal{S},~ T \in \mathcal{T}\}. $$
\end{definition}

\begin{example} \label{E:geomean-ST}
The weighted geometric mean of two subsets of $\mathbb{P}$ has a very important concept in the theory of operator, matrix means, and approximation. In order to see this insight, we give several examples in the following.
\begin{itemize}
\item[(1)] The weighted geometric mean $A \#_{t} B$ of $A$ and $B$ in $\mathbb{P}$ is a special example of that of two subsets $\mathcal{S} = \{ A \} \subset \mathbb{P}$ and $\mathcal{T} = \{ B \} \subset \mathbb{P}$. Moreover, if two subsets $\mathcal{S}$ and $\mathcal{T}$ have cardinalities of $p$ and $q$, respectively, then the cardinality of $\mathcal{S} \#_{t} \mathcal{T}$ is less than or equal to $p q$.

\item[(2)] For given $A, B \in \overline{\mathbb{P}}$, consider $\mathcal{S} = \{ A + \epsilon I: \epsilon_1 > 0 \}$ and $\mathcal{T} = \{ B + \epsilon_2 I: \epsilon > 0 \}$. Then $\mathcal{S}, \mathcal{T} \subset \mathbb{P}$, and
\begin{displaymath}
(A + \epsilon_1 I) \#_{t} (B + \epsilon_2 I) \in \mathcal{S} \#_{t} \mathcal{T},
\end{displaymath}
which is a generalized form of the right-hand side in the limit of \eqref{E:semi-geomean}.

\item[(3)] For $A, B, C, D \in \mathbb{P}$ let $\mathcal{S} = [A, B] = \{ X \in \mathbb{P}: A \leq X \leq B \}$ and $\mathcal{T} = [C, D] = \{ Y \in \mathbb{P}: C \leq Y \leq D \}$. Then by monotonicity of the geometric mean in Theorem \ref{T:geomean} (4),
$$ \mathcal{S} \#_{t} \mathcal{T} \subseteq [A \#_{t} C, B \#_{t} D]. $$

\item[(4)] For $A, B \in \mathbb{P}$ let $\mathcal{S} = \{ X \in \mathbb{H}: \| A - X \| \leq r_1 \}$ and $\mathcal{T} = \{ Y \in \mathbb{H}: \| B-Y \| \leq r_2 \}$. Then $\mathcal{S},\mathcal{T} \subset \mathbb{P}$ for sufficiently small $r_{1}, r_{2} > 0$. So the weighted geometric mean of $\mathcal{S}$ and $\mathcal{T}$, $\mathcal{S} \#_t \mathcal{T}$, can be considered as a set of approximations of $A \#_t B$.

\end{itemize}
\end{example}

Especially, for the case of (3) in Example \ref{E:geomean-ST} the following property which is similar to \cite[Theorem 3.4]{LL01} holds.
\begin{proposition}
Assume that two subsets $\mathcal{S}$ and $\mathcal{T}$ of $\mathbb{P}$ are totally ordered with respect to Loewner order. Then
\begin{equation} \label{E:max}
M \# N = \max{ \Bigg\{
X \in \mathbb{H} :
\left( \begin{array}{cc}
A & X \\
X & B \\
\end{array}
\right)
\geq 0, A \in \mathcal{S}, B \in \mathcal{T}
\Bigg\}},
\end{equation}
where $M$ is the maximum element of $\mathcal{S}$ and $N$ is the maximum element of $\mathcal{T}$.
\end{proposition}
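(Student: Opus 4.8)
The plan is to reduce this set-valued maximum to the single-pair characterization of the geometric mean combined with the monotonicity property in Theorem \ref{T:geomean}(4). I would first recall the single-pair analog, which is the content of \cite[Theorem 3.4]{LL01}: for \emph{fixed} $A, B \in \mathbb{P}$,
\[
A \# B = \max\left\{ X \in \mathbb{H} : \begin{pmatrix} A & X \\ X & B \end{pmatrix} \geq 0 \right\}.
\]
For completeness I would sketch why this holds. Since $A > 0$, the Schur complement criterion says the block matrix is positive semi-definite if and only if $B - X A^{-1} X \geq 0$. Setting $Z = A^{-1/2} X A^{-1/2}$ and $C = A^{-1/2} B A^{-1/2}$, this condition reads $Z^2 \leq C$, whence $Z \leq (Z^2)^{1/2} \leq C^{1/2}$ by operator monotonicity of the square root; conjugating back by $A^{1/2}$ gives $X \leq A^{1/2} C^{1/2} A^{1/2} = A \# B$. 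Equality is attained at $X = A \# B$, where the Schur complement vanishes by the Riccati identity $(A \# B) A^{-1} (A \# B) = B$.

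With this in hand, the proposition follows in two steps. For membership, since $M \in \mathcal{S}$ and $N \in \mathcal{T}$, the choice $A = M$, $B = N$ shows that $M \# N$ belongs to the set on the right-hand side of \eqref{E:max}. For the upper bound, let $X$ be any element of that set, so there exist $A \in \mathcal{S}$ and $B \in \mathcal{T}$ making the corresponding block matrix positive semi-definite; by the single-pair statement above, $X \leq A \# B$. Because $\mathcal{S}$ is totally ordered with greatest element $M$ and $\mathcal{T}$ is totally ordered with greatest element $N$, we have $A \leq M$ and $B \leq N$, and monotonicity (Theorem \ref{T:geomean}(4)) then yields $A \# B \leq M \# N$. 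Hence $X \leq M \# N$. Combining the two steps, $M \# N$ is the greatest element of the set, that is, its maximum.

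Given the single-pair characterization from \cite{LL01} and monotonicity from Theorem \ref{T:geomean}(4), there is no deep obstacle; the one point that will require care is the precise role of the total-order hypothesis. It is used exactly to guarantee that the ``maximum'' elements $M$ and $N$ are \emph{greatest} elements, so that $A \leq M$ and $B \leq N$ hold for \emph{every} $A \in \mathcal{S}$ and $B \in \mathcal{T}$; in a merely partially ordered set a maximal element need not dominate all others, and the domination step $A \# B \leq M \# N$ could fail. I would therefore emphasize this hypothesis where the inequalities $A \leq M$ and $B \leq N$ are invoked.
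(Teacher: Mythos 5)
Your proposal is correct and follows essentially the same route as the paper's proof: invoke the single-pair maximal characterization of $A \# B$ via the positive semi-definite block matrix (the paper cites Bhatia's Theorem 4.1.3(iii) rather than \cite{LL01}), then use the total-order hypothesis to get $A \leq M$, $B \leq N$ and monotonicity (Theorem \ref{T:geomean}(4)) to conclude $A \# B \leq M \# N$. Your version is somewhat more careful, adding the explicit membership step and a Schur-complement sketch of the single-pair result, but the argument is the same.
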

\begin{proof}
It is known from Theorem 4.1.3 (iii) in \cite{book:Bh} that for given $A \in \mathcal{S}, B \in \mathcal{T}$
\begin{equation*}
A \# B = \max{ \Bigg\{
X \in \mathbb{H} :
\left( \begin{array}{cc}
A & X \\
X & B \\
\end{array}
\right)
\geq 0
\Bigg\}}.
\end{equation*}
Since two subsets $\mathcal{S}$ and $\mathcal{T}$ of $\mathbb{P}$ are totally ordered with respect to Loewner order, $A \leq M$ for all $A \in \mathcal{S}$ and $B \leq N$ for all $B \in \mathcal{T}$. By the monotonicity of geometric mean in Theorem \ref{T:geomean} (4), $A \# B \leq M \# N$, and hence, we obtain \eqref{E:max}.
\end{proof}

\begin{theorem} \label{T:geomean-subsets}
Let $\mathcal{S}, \mathcal{T} \subset \mathbb{P}$, and let $t \in [0,1]$. Then
\begin{itemize}
\item[(1)] if $\mathcal{S}$ and $\mathcal{T}$ are bounded, then so is $\mathcal{S} \#_{t} \mathcal{T}$,
\item[(2)] if $\mathcal{S}$ and $\mathcal{T}$ are closed, then so is $\mathcal{S} \#_{t} \mathcal{T}$.
\end{itemize}
Hence, $\mathcal{S} \#_{t} \mathcal{T}$ is compact whenever $\mathcal{S}, \mathcal{T}$ are compact.
\end{theorem}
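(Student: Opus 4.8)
The plan is to prove the boundedness and closedness claims separately and then read off compactness, leaning throughout on the order and continuity properties of the two-variable mean collected in Theorem~\ref{T:geomean}.

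For~(1) I would bound $\mathcal{S} \#_t \mathcal{T}$ from above in the Loewner order by a scalar multiple of the identity. By Remark~\ref{R:compatible} the two norms are compatible, so boundedness of $\mathcal{S}$ and $\mathcal{T}$ furnishes constants $\alpha, \beta > 0$ with $\|S\| \le \alpha$ and $\|T\| \le \beta$ for every $S \in \mathcal{S}$ and $T \in \mathcal{T}$; since $S$ and $T$ are positive definite this means $S \le \alpha I$ and $T \le \beta I$. Monotonicity, Theorem~\ref{T:geomean}(4), then gives $S \#_t T \le (\alpha I) \#_t (\beta I)$, and because $\alpha I$ and $\beta I$ commute, Theorem~\ref{T:geomean}(1) evaluates the right-hand side as $\alpha^{1-t} \beta^{t} I$. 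Thus $\|S \#_t T\| \le \alpha^{1-t} \beta^{t}$ uniformly, and compatibility of the norms again makes $\mathcal{S} \#_t \mathcal{T}$ bounded in the Frobenius norm. This part should be routine.

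For~(2) I would argue with sequences. Suppose $X_k = S_k \#_t T_k$ with $S_k \in \mathcal{S}$, $T_k \in \mathcal{T}$, and $X_k \to X$; the aim is to exhibit $S \in \mathcal{S}$ and $T \in \mathcal{T}$ with $X = S \#_t T$. Were I able to pass to subsequences with $S_k \to S$ and $T_k \to T$, closedness of $\mathcal{S}$ and $\mathcal{T}$ would give $S \in \mathcal{S}$, $T \in \mathcal{T}$, and the joint continuity in Theorem~\ref{T:geomean}(5) would force $X = S \#_t T \in \mathcal{S} \#_t \mathcal{T}$. The main obstacle is precisely securing those convergent subsequences: the convergence of $X_k$ alone does not keep $S_k$ and $T_k$ bounded, nor away from the boundary of $\mathbb{P}$. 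I would attempt to extract the needed compactness from the bounded sequence $X_k$ together with the determinant identity $\det(S_k \#_t T_k) = (\det S_k)^{1-t} (\det T_k)^{t}$ of Theorem~\ref{T:geomean}(9) and the two-sided estimate of Theorem~\ref{T:geomean}(10), which sandwiches $X_k$ between the harmonic and arithmetic combinations of $S_k$ and $T_k$; extracting simultaneous control of $S_k$ and $T_k$ from a single constraint on their mean is the delicate point, and it is here that the closedness hypothesis has to be used most carefully.

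Finally, for the compactness corollary I would bypass the subtlety in~(2): if $\mathcal{S}$ and $\mathcal{T}$ are compact, then $\mathcal{S} \times \mathcal{T}$ is compact, the map $(S, T) \mapsto S \#_t T$ is continuous by Theorem~\ref{T:geomean}(5), and the continuous image of a compact set is compact, so $\mathcal{S} \#_t \mathcal{T}$ is compact. Equivalently, one may combine~(1) and~(2) with the Heine--Borel theorem in the finite-dimensional space $\mathbb{H}$.
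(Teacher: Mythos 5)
Your part (1) is essentially the paper's argument: norm compatibility gives $S\le\alpha I$ and $T\le\beta I$, then monotonicity (Theorem~\ref{T:geomean}(4)) and the commuting case (Theorem~\ref{T:geomean}(1)) yield $S\#_t T\le\alpha^{1-t}\beta^{t}I$, hence boundedness. Your direct proof of the compactness clause---$\mathcal{S}\times\mathcal{T}$ is compact, $(S,T)\mapsto S\#_t T$ is continuous by Theorem~\ref{T:geomean}(5), and the continuous image of a compact set is compact---is also correct, and it is the right way to obtain the ``Hence'' statement precisely because it does not pass through item~(2).

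For item (2) you have isolated the genuine proof obligation (start from a convergent sequence $X_k=S_k\#_t T_k$ in the image) and the genuine obstruction (nothing forces $S_k$ and $T_k$ to subconverge), and your hesitation is well founded: the gap cannot be closed, because (2) is false as stated. Take, in the $2\times 2$ case, $\mathcal{S}=\{\diag(k,1/k):k\in\N\}$ and $\mathcal{T}=\{\diag(\tfrac{k+1}{k^{2}},k):k\in\N\}$. Both are closed subsets of $\mathbb{P}$ (any convergent sequence has bounded norm, hence ranges over finitely many elements), and since everything is diagonal, Theorem~\ref{T:geomean}(1) gives $\diag(j,1/j)\#_{1/2}\diag(\tfrac{m+1}{m^{2}},m)=\diag\bigl(\sqrt{j(m+1)}/m,\sqrt{m/j}\bigr)$. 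Taking $j=m=k$ produces $\diag(\sqrt{1+1/k},1)\to I$, yet $I\notin\mathcal{S}\#_{1/2}\mathcal{T}$, since membership would force $m=j$ from the $(2,2)$ entry and then the $(1,1)$ entry equals $\sqrt{1+1/m}\neq 1$. So no manipulation of the determinant identity or the arithmetic--harmonic sandwich can recover $S_k$ and $T_k$ from $X_k$; the needed information is lost when the sets are unbounded or approach the boundary of $\mathbb{P}$. You should also know that the paper's own proof of (2) does not resolve this: it assumes $A_n\to A\in\mathcal{S}$ and $B_n\to B\in\mathcal{T}$ and concludes $A_n\#_t B_n\to A\#_t B$, which is merely the continuity of the mean and says nothing about an arbitrary convergent sequence in $\mathcal{S}\#_t\mathcal{T}$. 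Your proposal is thus more careful than the source on this point; the honest repair is to assume $\mathcal{S}$ and $\mathcal{T}$ compact, in which case either your subsequence argument or the continuous-image argument goes through.
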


\begin{proof}
Note that it is enough to show (1) and (2) for compactness, since $\mathbb{P}$ is a subset of Euclidean space $\mathbb{H}$.
\begin{itemize}
\item[(1)] Assume that $\mathcal{S}$ and $\mathcal{T}$ are bounded. By Remark \ref{R:compatible} $\Vert A \Vert \leq c$ for all $A \in \mathcal{S}$ and some constant $c > 0$, and $\Vert B \Vert \leq d$ for all $B \in \mathcal{T}$ and some constant $d > 0$. Then $0 < A \leq c I$ for all $A \in \mathcal{S}$, and $0 < B \leq d I$ for all $B \in \mathcal{T}$. By Example \ref{E:geomean-ST} (3), and Theorem \ref{T:geomean} (1) and (4), it follows that
\begin{displaymath}
0 < A \#_{t} B \leq c^{1-t} d^{t} I.
\end{displaymath}
That is, $\Vert A \#_{t} B \Vert \leq c^{1-t} d^{t}$, and thus, $\mathcal{S} \#_{t} \mathcal{T}$ is bounded.

\item[(2)] Let $\mathcal{S}$ and $\mathcal{T}$ be closed. Assume that sequences $A_{n} \in \mathcal{S}$ and $B_{n} \in \mathcal{S}$ converge to $A \in \mathcal{S}$ and $B \in \mathcal{T}$ with respect to the Riemannian distance, respectively. By continuity of the geometric mean in Theorem \ref{T:geomean} (5) and Remark \ref{R:Riemann},
\begin{displaymath}
A_{n} \#_{t} B_{n} \to A \#_{t} B \in \mathcal{S} \#_{t} \mathcal{T}.
\end{displaymath}
Thus, $\mathcal{S} \#_{t} \mathcal{T}$ is closed.
\end{itemize}
\end{proof}

\begin{remark}
Note that the union of a finite number of compact subsets of $\mathbb{P}$ is compact and the intersection of any family of compact subspace of $\mathbb{P}$ is compact. If
$\{\mathcal{S}_i\}$ and $\{\mathcal{T}_j\}$ are collections of compact subsets of $\mathbb{P}$, then
$$\bigg( \bigcup_{k=1}^n \mathcal{S}_{i_k} \bigg)\# \bigg( \bigcup_{k=1}^m \mathcal{T}_{j_k}\bigg) \quad \text{and} \quad
\bigg( \bigcap_i \mathcal{S}_i \bigg)\# \bigg( \bigcap_j \mathcal{T}_j\bigg)$$  are compact.
\end{remark}

\begin{remark} \label{R:convex}
Assume that $\mathcal{S}, \mathcal{T} \subset \mathbb{P}$ are convex. Let $A, B \in \mathcal{S}$ and $C, D \in \mathcal{T}$. Since $\mathcal{S}$ and $\mathcal{T}$ are convex, $(1 - \lambda) A + \lambda B \in \mathcal{S}$ and $(1 - \lambda) C + \lambda D \in \mathcal{T}$ for any $\lambda \in [0,1]$, and hence, $[(1 - \lambda) A + \lambda B] \#_{t} [(1 - \lambda) C + \lambda D] \in \mathcal{S} \#_{t} \mathcal{T}$. On the other hand, it holds from the joint concavity of geometric mean in Theorem \ref{T:geomean} (7) that
$$ [(1 - \lambda) A + \lambda B] \#_{t} [(1 - \lambda) C + \lambda D] \geq (1 - \lambda) (A \#_{t} C) + \lambda (B \#_{t} D) $$
for any $\lambda \in [0,1]$. It is questionable whether or not $(1 - \lambda) (A \#_{t} C) + \lambda (B \#_{t} D)\in \mathcal{S} \#_{t} \mathcal{T}$. If it is true, then we can say that $\mathcal{S} \#_{t} \mathcal{T}$ is convex.
\end{remark}

\section{Geometric mean of partial positive matrices}\label{sec:mean_partial}

From now on, we consider the geometric mean of partial positive matrices. In this paper, we assume that
any graph $G$ always includes all loops.
That is, any partial matrix does not have missing entries on diagonal.
Recall that $\mathbb{H} \subset M_{n}$ is the real vector space of all Hermitian matrices, $\mathbb{P} \subset \mathbb{H}$ is the open convex cone of $n \times n$ positive definite matrices, and the closure $\bar{\mathbb{P}}$ of $\mathbb{P}$ consists of all $n \times n$ positive semi-definite matrices.
We define
\begin{align*}
\mathbb{H}(G) :&= \{ A(G) : A(G) \text{ is a $n \times n$ $G$-partial Hermitian matrix.} \}, \\
\mathbb{P}(G) :&= \{ A(G) \in \mathbb{H}(G) : A(G) \text{ is a partial positive definite matrix.} \}.
\end{align*}

For a given $G$-partial matrix $A(G)$, we denote as
$\mathfrak{p}[A(G)]$ and $\mathfrak{p}^+[A(G)]$ the sets of all positive semi-definite and positive definite completions of $A(G)$, respectively.

\begin{theorem} \label{T:property}
Let $A(G)$ be a partial positive semidefinite matrix with a completable graph $G$.
Then $\mathfrak{p}[A(G)]$ is nonempty, convex, and compact.
\end{theorem}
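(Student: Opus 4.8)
The plan is to establish the three properties—nonemptiness, convexity, and compactness—separately, as the claim is a conjunction of distinct structural facts about the completion set $\mathfrak{p}[A(G)]$.

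First I would dispatch nonemptiness, which is essentially immediate from the hypothesis. Since $A(G)$ is partial positive semidefinite and the graph $G$ is completable (equivalently chordal by Theorem \ref{chordal}), the defining property of completability guarantees that $A(G)$ admits at least one positive semidefinite completion, so $\mathfrak{p}[A(G)] \neq \emptyset$.

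Next I would handle convexity. The key observation is that $\mathfrak{p}[A(G)]$ is an intersection of two convex sets: the closed cone $\overline{\mathbb{P}}$ of positive semidefinite matrices, which is convex, and the affine subspace $\mathcal{A} := \{ M \in \mathbb{H} : m_{ij} = a_{ij} \text{ for all } \{i,j\} \in E \}$ consisting of all Hermitian completions of $A(G)$. Indeed, $\mathfrak{p}[A(G)] = \overline{\mathbb{P}} \cap \mathcal{A}$. For any two completions $M, N \in \mathfrak{p}[A(G)]$ and $\lambda \in [0,1]$, the combination $(1-\lambda) M + \lambda N$ still matches $a_{ij}$ on every specified entry, so it lies in $\mathcal{A}$; and it remains positive semidefinite since $\overline{\mathbb{P}}$ is convex. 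Hence the convex combination lies in $\mathfrak{p}[A(G)]$.

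Finally, for compactness I would argue closedness and boundedness separately, since $\mathbb{H}$ is a finite-dimensional Euclidean space. Closedness follows because $\mathfrak{p}[A(G)] = \overline{\mathbb{P}} \cap \mathcal{A}$ is the intersection of two closed sets. The main obstacle, and the step requiring genuine care, is boundedness: the affine constraints fix only the entries on edges of $E$, so one must rule out the missing entries growing without bound. The resolution is that for a positive semidefinite matrix $M = [m_{ij}]$, every off-diagonal entry is controlled by the diagonal via $|m_{ij}|^2 \leq m_{ii} m_{jj}$ (positive semidefiniteness of each $2 \times 2$ principal submatrix). Since all diagonal entries lie on loops and are therefore specified—recall $G$ contains all loops—every $m_{ii} = a_{ii}$ is fixed, which uniformly bounds every entry of $M$ over $\mathfrak{p}[A(G)]$. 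Thus $\mathfrak{p}[A(G)]$ is bounded, and being closed and bounded in the finite-dimensional space $\mathbb{H}$, it is compact by the Heine--Borel theorem.
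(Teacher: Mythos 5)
Your proof is correct and follows essentially the same route as the paper's: nonemptiness from completability, convexity from convex combinations preserving both the specified entries and positive semidefiniteness, and compactness via closed-and-bounded in the finite-dimensional space $\mathbb{H}$, with boundedness hinging on the specified diagonal. The only cosmetic difference is in the boundedness step, where the paper bounds $\Vert M \Vert_{2}$ by $\tr(M)=\sum_i a_{ii}$ directly, while you bound each entry via $|m_{ij}|^2 \leq m_{ii} m_{jj}$; both arguments rest on the same fact that all loops are in $E$.
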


\begin{proof}
Since $G$ is a positive completable graph and $A(G)$ is a partial positive semi-definite matrix, clearly $\mathfrak{p}[A(G)]$ is nonempty.
If $M, N \in \mathfrak{p}[A(G)]$, then
$(1-t) M + t N \in \mathfrak{p}[A(G)]$ for $t\in [0,1]$, so $\mathfrak{p}[A(G)]$ is convex. Since we assume that diagonal entries are given, $\mathfrak{p}[A(G)]$ is bounded, since
\begin{displaymath}
\Vert M \Vert_{2} = \left[ \sum_{i=1}^{n} \lambda_{i}^{2} (M) \right]^{1/2} \leq \tr (M) = \sum_{i=1}^{n} m_{ii} < \infty
\end{displaymath}
for any $M \in \mathfrak{p}[A(G)]$, where $\lambda_{i}(M)$ denotes the non-negative eigenvalue of $M$.

Now we show that $\mathfrak{p}[A(G)]$ is closed.
Let $M_{k} = [m_{ij}^{(k)}]$ be a sequence in $\mathfrak{p}[A(G)]$ converging to $M = [m_{ij}]$ in the Frobenius norm. Since $M_{k} \in \overline{\mathbb{P}}$ for all $k$, we have $M \in \overline{\mathbb{P}}$. Moreover, since
\begin{displaymath}
| m_{ij}^{(k)} - m_{ij} | \leq \Vert M_{k} - M \Vert_{2},
\end{displaymath}
we have that $m_{ij}^{(k)} \to m_{ij}$ as $k \to \infty$ for all $1 \leq i, j \leq n$. Since $m_{ij}^{(k)} = m_{ij}$ for all $k$ and $\{ i,j \} \in E$, taking the limit as $k \to \infty$ yields that $m_{ij} = a_{ij}$ for all $\{ i,j \} \in E$. So, $M$ is a positive semi-definite completion of $A(G)$, that is, $M \in \mathfrak{p}[A(G)]$.
\end{proof}

\begin{remark} \label{R:property}
For a  partial positive definite matrix $A(G)$ with a completable graph $G$, one can see easily that $\mathfrak{p}^{+}[A(G)]\in \mathbb{P}(G)$ is nonempty, convex, and bounded.
Since $\mathfrak{p}^{+}[A(G)] \subset \mathfrak{p}[A(G)]$, we have $\overline{\mathfrak{p}^{+}[A(G)]} \subset \overline{\mathfrak{p}[A(G)]} = \mathfrak{p}[A(G)]$ by Theorem \ref{T:property}, where $\overline{\mathfrak{p}^{+}[A(G)]}$ is the closure of $\mathfrak{p}^{+}[A(G)]$. On the other hand, it is questionable that  $\mathfrak{p}[A(G)] \subset \overline{\mathfrak{p}^{+}[A(G)]}$.
\end{remark}

Let $G$ and $F$ be given completable graphs.
One can naturally ask to define the geometric mean of partial positive definite matrices $A(G)$ and $B(F)$. Using the geometric mean of subsets of $\mathbb{P}$ in Definition \ref{D:geomean}, we define the geometric mean of two partial positive definite matrices $A(G)$ and $B(F)$ as
\begin{equation}
A(G) \#_{t} B(F) := \mathfrak{p}^{+}[A(G)] \#_{t} \mathfrak{p}^{+}[B(F)],
\end{equation}
where $t \in [0,1]$.

\begin{remark}
Using Remark \ref{R:PSD}, one can define the geometric mean of partial positive semi-definite matrices $A(G)$ and $B(F)$ as
\begin{displaymath}
\mathfrak{p}[A(G)] \#_{t} \mathfrak{p}[B(F)].
\end{displaymath}
There are some results for the geometric mean of positive semi-definite matrices \cite{KA}, but it holds more limited properties than that of positive definite matrices.
So we consider in this article the geometric mean of partial positive definite matrices.
\end{remark}

\begin{remark} \label{R:bounded}
By Theorem \ref{T:geomean-subsets} and Reamrk \ref{R:property},  $A(G) \#_{t} B(F)$ is bounded for partial positive definite matrices $A(G)$ and $B(F)$ with completable graphs $G$ and $F$.
\end{remark}

It would be interesting to find some properties for $A(G) \#_{t} B(F)$ corresponding to those in Theorem \ref{T:geomean}.
\begin{remark}
Note that
$A(G) \#_{t} A(G) \neq \mathfrak{p}^{+}[A(G)]$, since $A_{1} \#_{t} A_{2}$ may not be a positive definite completion of $A(G)$ even though $A_{1}, A_{2} \in \mathfrak{p}^{+}[A(G)]$. For instance, see Example \ref{completable_example1}. Let
\begin{displaymath}
A_{1} =
\begin{bmatrix}
1 & 1 & 1 & 1 \\
1 & 5 & 1 & 1 \\
1 & 1 & 3 & 1 \\
1 & 1 & 1 & 2
\end{bmatrix}, \ \
A_{2} =
\begin{bmatrix}
1 & -1 & 1 & 1 \\
-1 & 5 & 1 & -1 \\
1 & 1 & 3 & 1 \\
1 & -1 & 1 & 2
\end{bmatrix}.
\end{displaymath}
Then $A_{1}$ and $A_{2}$ are positive definite completions of $A(G)$. However,
\begin{displaymath}
A_{1} \# A_{2} \approx
\begin{bmatrix}
0.8750 & -0.0769 & 1 & 0.8750 \\
-0.0769 & 4.1251 & 1 & -0.0769 \\
1 & 1 & 3 & 1 \\
0.8750 & -0.0769 & 1 & 1.8750
\end{bmatrix},
\end{displaymath}
which is not a positive definite completion of $A(G)$. Clearly, it holds that  $\mathfrak{p}^{+}[A(G)] \subset A(G) \# A(G)  $.
\end{remark}

For $A(G), B(G) \in \mathbb{H}(G)$ with a given graph $G$, \emph{the sum of two $G$-partial matrices} and \emph{the scalar product of a $G$-partial matrix}, denoted by $A(G)+B(G)$ and $\alpha A(G)$ are defined by
\begin{align*}
[A(G)+B(G)]_{ij} &=
\begin{cases}
a_{ij}+b_{ij} & \text{if } \{ i,j \} \in E,\\
\text{missing} & \text{otherwise,}
\end{cases}\\
[\alpha A(G)]_{ij} &=
\begin{cases}
\alpha a_{ij} & \text{if } \{ i,j \} \in E,\\
\text{missing} & \text{otherwise,}
\end{cases}
\end{align*}
where
$A(G) = [a_{ij}]_{\{i,j\} \in E}$ and $B(G) = [b_{ij}]_{\{i,j\} \in E}$, respectively.

Note that $A(G)+B(G)$ and $\alpha A(G)$ are  in $\mathbb{H}(G)$.
It is natural to define \emph{the difference of two $G$-partial matrices} as
$A(G)-B(G):=A(G)+(-1)B(G)$.

Let $\mathcal{S} \subset \mathbb{P}$ and let $\alpha > 0$. For convenience, we denote
\begin{equation*}
\begin{split}
\alpha \mathcal{S} : & = \{ \alpha A: A \in \mathcal{S} \}, \\
\mathcal{S}^{-1} : & = \{ A^{-1}: A \in \mathcal{S} \}.
\end{split}
\end{equation*}
Note that $\alpha \mathcal{S}, \mathcal{S}^{-1} \subset \mathbb{P}$.
It is trivial that
$$ \alpha \mathfrak{p}^{+}[A(G)] = \mathfrak{p}^{+}[\alpha A(G)], \quad
\mathfrak{p}^{+}[A(G) + B(G)] = \mathfrak{p}^{+}[A(G)] + \mathfrak{p}^{+}[B(G)]. $$

\begin{proposition}
For completable graphs $G$ and $F$, let $A(G)$ and $B(F)$ be partial positive definite matrices. Then the following hold:
\begin{itemize}
\item[(1)] $a^{1-t} b^{t} \left( \mathfrak{p}^{+}[A(G)] \#_{t} \mathfrak{p}^{+}[B(F)] \right) = (a \mathfrak{p}^{+}[A(G)]) \#_{t} (b \mathfrak{p}^{+}[B(F)])$ for any $a, b > 0$,
\item[(2)] $\mathfrak{p}^{+}[A(G)] \#_{t} \mathfrak{p}^{+}[B(F)] = \mathfrak{p}^{+}[B(F)] \#_{1-t} \mathfrak{p}^{+}[A(G)]$, and
\item[(3)] $\left( \mathfrak{p}^{+}[A(G)] \#_{t} \mathfrak{p}^{+}[B(F)] \right)^{-1} = \mathfrak{p}^{+}[A(G)]^{-1} \#_{t} \mathfrak{p}^{+}[B(F)]^{-1}$.
\end{itemize}
\end{proposition}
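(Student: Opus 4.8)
The plan is to reduce each of the three set identities to the corresponding pointwise property of the two-variable geometric mean recorded in Theorem \ref{T:geomean}, using only the element-wise definition of $\mathcal{S} \#_t \mathcal{T}$ from Definition \ref{D:geomean} together with the definitions of $\alpha \mathcal{S}$ and $\mathcal{S}^{-1}$. Throughout I would write $\mathcal{S} := \mathfrak{p}^{+}[A(G)]$ and $\mathcal{T} := \mathfrak{p}^{+}[B(F)]$; by Remark \ref{R:property} both are nonempty subsets of $\mathbb{P}$, and the scaled and inverted sets $a\mathcal{S}$, $\mathcal{S}^{-1}$ again lie in $\mathbb{P}$, so every geometric mean of subsets appearing below is well defined.

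For (1), I would expand the left-hand side as $a^{1-t} b^{t} (\mathcal{S} \#_t \mathcal{T}) = \{ a^{1-t} b^{t} (S \#_t T) : S \in \mathcal{S},\, T \in \mathcal{T} \}$ and the right-hand side as $(a\mathcal{S}) \#_t (b\mathcal{T}) = \{ (aS) \#_t (bT) : S \in \mathcal{S},\, T \in \mathcal{T} \}$. Theorem \ref{T:geomean} (2) gives $(aS) \#_t (bT) = a^{1-t} b^{t} (S \#_t T)$ for each fixed pair $(S,T)$, so the two descriptions enumerate exactly the same elements and the sets coincide.

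Parts (2) and (3) follow the same pattern. For (2), the symmetry $S \#_t T = T \#_{1-t} S$ from Theorem \ref{T:geomean} (3) shows that $\{ S \#_t T : S \in \mathcal{S},\, T \in \mathcal{T} \}$ and $\{ T \#_{1-t} S : T \in \mathcal{T},\, S \in \mathcal{S} \}$ are the same collection, that is, $\mathcal{S} \#_t \mathcal{T} = \mathcal{T} \#_{1-t} \mathcal{S}$. For (3), unwinding $(\mathcal{S} \#_t \mathcal{T})^{-1} = \{ (S \#_t T)^{-1} : S \in \mathcal{S},\, T \in \mathcal{T} \}$ and $\mathcal{S}^{-1} \#_t \mathcal{T}^{-1} = \{ S^{-1} \#_t T^{-1} : S \in \mathcal{S},\, T \in \mathcal{T} \}$, the inversion law $(S \#_t T)^{-1} = S^{-1} \#_t T^{-1}$ from Theorem \ref{T:geomean} (8) matches the two sets term by term.

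In each case rigor amounts to a double inclusion, which is automatic here because the pointwise property furnishes a bijection between the indexing pairs $(S,T)$ on the two sides. I do not anticipate a genuine obstacle: the only points needing a word of care are that the scalar $a^{1-t}b^{t}$ is positive, so that $a^{1-t}b^{t}(\mathcal{S} \#_t \mathcal{T})$ is a legitimate scaling of a subset of $\mathbb{P}$, and that $a\mathcal{S}$ and $\mathcal{S}^{-1}$ remain inside $\mathbb{P}$, both already noted in the excerpt. It is worth remarking that the argument never uses that $\mathcal{S}$ and $\mathcal{T}$ arise as completion sets, so the proposition in fact holds verbatim for arbitrary subsets of $\mathbb{P}$.
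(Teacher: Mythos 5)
Your proposal is correct and follows essentially the same route as the paper: each set identity is reduced to the corresponding pointwise property of the two-variable geometric mean, namely Theorem \ref{T:geomean} (2), (3), and (8). The paper writes out the double inclusion explicitly for part (1) (passing through $a\,\mathfrak{p}^{+}[A(G)] = \mathfrak{p}^{+}[aA(G)]$), but the substance is identical to your bijection-of-indexing-pairs argument.
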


\begin{proof}
Let $A(G) \in \mathbb{P}(G)$ and $B(F) \in \mathbb{P}(F)$.
\begin{itemize}
\item[(1)] Let $A \#_{t} B \in \mathfrak{p}^{+}[A(G)] \#_{t} \mathfrak{p}^{+}[B(F)]$, where $A \in \mathfrak{p}^{+}[A(G)]$ and $B \in \mathfrak{p}^{+}[B(F)]$. Then $a A \in a \mathfrak{p}^{+}[A(G)] = \mathfrak{p}^{+}[a A(G)]$, and $b B \in \mathfrak{p}^{+}[b B(G)]$. By Theorem \ref{T:geomean} (2),
\begin{displaymath}
a^{1-t} b^{t} (A \#_{t} B) = (a A) \#_{t} (b B) \in (a \mathfrak{p}^{+}[A(G)]) \#_{t} (b \mathfrak{p}^{+}[B(F)]).
\end{displaymath}
So $a^{1-t} b^{t} \left( \mathfrak{p}^{+}[A(G)] \#_{t} \mathfrak{p}^{+}[B(F)] \right) \subseteq (a \mathfrak{p}^{+}[A(G)]) \#_{t} (b \mathfrak{p}^{+}[B(F)])$.

Let $C \#_{t} D \in (a \mathfrak{p}^{+}[A(G)]) \#_{t} (b \mathfrak{p}^{+}[B(F)])$, where $C \in \mathfrak{p}^{+}[a A(G)]$ and $D \in \mathfrak{p}^{+}[b B(F)]$. Then $a^{-1} C \in \mathfrak{p}^{+}[A(G)]$ and $b^{-1} D \in \mathfrak{p}^{+}[B(F)]$, and furthermore, by Theorem \ref{T:geomean} (2)
\begin{displaymath}
C \#_{t} D = a^{1-t} b^{t} [ (a^{-1} C) \#_{t} (b^{-1} D) ] \in a^{1-t} b^{t} \left( \mathfrak{p}^{+}[A(G)] \#_{t} \mathfrak{p}^{+}[B(F)] \right).
\end{displaymath}

\item[(2)] By Theorem \ref{T:geomean} (3), $A \#_{t} B = B \#_{1-t} A$ for any $A \in \mathfrak{p}^{+}[A(G)]$ and $B \in \mathfrak{p}^{+}[B(F)]$. Thus, it is proved.

\item[(3)] By Theorem \ref{T:geomean} (8), $(A \#_{t} B)^{-1} = A^{-1} \#_{t} B^{-1}$ for any $A \in \mathfrak{p}^{+}[A(G)]$ and $B \in \mathfrak{p}^{+}[B(F)]$. Thus, it is proved.
\end{itemize}
\end{proof}

\section{Difference of partial matrices and Loewner order on partial matrices}\label{sec:Loewner}

Analogous to the Loewner order on $\mathbb{H}$, we define the relation for $G$-partial semi-definite matrices.
\begin{definition}\label{D:order}
For a given completable graph $G$, we define the relation $\leq$ on $\mathbb{H}(G)$ as follows:
\begin{itemize}
\item[(i)] $A(G) \geq B(G)$ if and only if $A(G)-B(G) \in \overline{\mathbb{P}}(G)$;
\item[(ii)] $A(G) > B(G)$ if and only if $A(G)-B(G)\in \mathbb{P}(G)$.
\end{itemize}
\end{definition}

\begin{theorem}
The relation $\leq$ is indeed a partial order on $\mathbb{H}(G)$ with a completable graph $G$.
\end{theorem}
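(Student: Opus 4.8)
The plan is to verify the three defining properties of a partial order---reflexivity, antisymmetry, and transitivity---directly from Definition~\ref{D:order}, checking membership in $\overline{\mathbb{P}}(G)$ clique by clique. Throughout I use that $\overline{\mathbb{P}}(G)$ consists of the $G$-partial Hermitian matrices whose principal submatrix on every clique of $G$ is positive semi-definite, and I exploit the standing assumption that $G$ contains all loops, so that every vertex $\{i\}$ is a clique and every edge $\{i,j\} \in E$ gives a two-element clique $\{i,j\}$.

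First I would dispose of reflexivity and transitivity, which reduce to the fact that $\overline{\mathbb{P}}(G)$ contains the zero $G$-partial matrix $0(G)$ and is closed under addition. For reflexivity, $A(G) - A(G) = 0(G)$ restricts on every clique to the zero matrix, hence $0(G) \in \overline{\mathbb{P}}(G)$ and $A(G) \geq A(G)$. For transitivity, if $A(G) \geq B(G)$ and $B(G) \geq C(G)$, then $A(G)-B(G)$ and $B(G)-C(G)$ lie in $\overline{\mathbb{P}}(G)$; writing $A(G)-C(G) = (A(G)-B(G)) + (B(G)-C(G))$, its restriction to any clique $K$ is a sum of two positive semi-definite matrices and so is positive semi-definite, while Hermitian symmetry of the specified entries is clearly preserved under addition. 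Thus $A(G)-C(G) \in \overline{\mathbb{P}}(G)$, i.e. $A(G) \geq C(G)$.

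The substantive step, and the one I expect to be the only genuine obstacle, is antisymmetry. Suppose $A(G) \geq B(G)$ and $B(G) \geq A(G)$ and set $D(G) := A(G) - B(G)$, so that both $D(G)$ and $-D(G)$ belong to $\overline{\mathbb{P}}(G)$. For every clique $K$ the principal submatrix $D_K := [d_{ij}]_{i,j \in K}$ then satisfies $D_K \geq O$ and $-D_K \geq O$ in the Loewner order on $\mathbb{H}$, which forces $D_K = O$. The remaining point is to promote this cliquewise vanishing to $D(G) = 0(G)$, and here the loop assumption is essential: every specified entry of $D(G)$ lies inside some clique, since a diagonal entry $d_{ii}$ sits in the singleton clique $\{i\}$ and an off-diagonal specified entry $d_{ij}$ with $\{i,j\} \in E$ sits in the two-element clique $\{i,j\}$ (a clique precisely because $\{i,i\},\{j,j\},\{i,j\} \in E$). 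Hence $d_{ij} = 0$ for all $\{i,j\} \in E$, so $D(G) = 0(G)$ and $A(G) = B(G)$. This yields antisymmetry and completes the verification that $\leq$ is a partial order. I would finally remark that completability of $G$ is not actually used in the argument---it serves only to guarantee that the partial matrices in question admit completions---whereas the loop assumption is exactly what prevents any specified entry from being invisible to the cliquewise positivity test.
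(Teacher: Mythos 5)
Your proof is correct and follows essentially the same route as the paper: a direct cliquewise verification of reflexivity, antisymmetry, and transitivity, with transitivity handled by adding the two positive semi-definite clique restrictions and antisymmetry by forcing each clique restriction to vanish (the paper passes through the zero-diagonal observation, you use $D_K\geq O$ and $-D_K\geq O$ directly, which is an immaterial difference). Your closing remark that completability is never actually used is accurate and consistent with the paper's own argument.
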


\begin{proof}
Let $G$ be a completable graph.

\noindent ({\bf Reflexive}) Since $A(G)-A(G)$ is the zero matrix, it holds $A(G) \leq A(G)$ for all $A(G)\in \mathbb{H}(G)$.

\noindent ({\bf Anti-symmetric}) Suppose that $A(G) \leq B(G)$ and $B(G) \leq A(G)$ for $A(G), B(G) \in \mathbb{H}(G)$. Since $A(G)-B(G)$ and $A(G)-B(G)$ partial positive semidefinite, then the diagonal entries of $A(G)-B(G)$ must be 0, implying $A(G)-B(G)=0$. Thus $A(G)=B(G)$.

\noindent ({\bf Transitive}) Suppose that $A(G) \leq B(G)$ and $B(G) \leq C(G)$ for $A(G), B(G), C(G) \in \mathbb{H}(G)$. Let $\alpha \subset \{ 1, 2, \dots, n \}$ such that $A(G)[\alpha]$ be a fully specified principal submatrix. Clearly, $M=(A(G)-B(G))[\alpha]$ and $N=(B(G)-C(G))[\alpha]$ are fully specified principal submatrices of $A(G)-B(G)$ and $B(G)-C(G)$, respectively. Since $A(G)-B(G)$ and $B(G)-C(G)$ are partial positive semidefinite, $M$ and $N$ are positive semidefinite. Note that $M+N$ is a fully specified principal submatrix of $A(G)-C(G)$ and positive semidefinite. Since $\alpha$ is arbitrary, every fully specified submarix of $A(G)-C(G)$ is positive semidefinite, implying $A(G)\leq C(G)$.
\end{proof}

\begin{example}\label{ex:PD-order}
Consider the following $G$-partial (positive) matrices:
\begin{equation*}
A(G) =
\begin{bmatrix}
    3 & ? & 2 & 1\\
    ? & 6 & 1 & ?\\
    2 & 1 & 4 & 1\\
    1 & ? & 1 & 5
\end{bmatrix}, \quad
B(G) =
\begin{bmatrix}
    1 & ? & 1 & 1\\
    ? & 5 & 1 & ?\\
    1 & 1 & 3 & 1\\
    1 & ? & 1 & 2
\end{bmatrix}.
\end{equation*}
Since the difference
\begin{equation*}
C(G):=A(G)-B(G) =
\begin{bmatrix}
    2 & ? & 1 & 0\\
    ? & 1 & 0 & ?\\
    1 & 0 & 1 & 0\\
    0 & ? & 0 & 3
\end{bmatrix}
\end{equation*}
is partial positive definite,
by the definition $A(G) > B(G)$.
\end{example}

\begin{remark}
In general, the existence of positive completions of two partial matrices $A(G)$ and $B(G)$ does not guarantee the existence of positive completion of $A(G)-B(G)$.
For example, the partial matrices $A(G)$ and $B(G)$ in Example \ref{ex:PD-order}
have positive definite completions (see Example \ref{completable_example1}).
Since the difference $C(G)$ is partial positive definite with positive completable graph $G$, it also has positive definite completions.
On the other hand, the partial matrix $A(G)-3B(G)$  does not have any positive (semi-)definite completion since it is not partial positive (semi-)definite although $A(G)\geq 0$, $3B(G)\geq 0$, and they have positive (semi-)definite completions.
\end{remark}

\begin{lemma}\label{lemma_order}
Let $G$ be a given completable graph. Suppose that $A(G)$ and $B(G)$ are partial positive definite matrices. Then
$\mathfrak{p}^{+}[A(G)-B(G)] \subset \mathfrak{p}^{+}[A(G)] - \mathfrak{p}^{+}[B(G)]$.
\begin{proof}
Let $G = (V,E)$ be a given completable graph. If $\mathfrak{p}^{+}[A(G)-B(G)] = \emptyset$, it is trivial.
Let $C \in \mathfrak{p}^{+}[A(G)-B(G)]$.
Then $C$ is a positive definite completion of $A(G)-B(G)$.
That is, there exist a positive definite matrix $C=[c_{ij}]$ such that $c_{ij}=a_{ij}-b_{ij}$ for all $\{i,j\} \in E$, where $A(G)=[a_{ij}]_{\{i,j\}\in E}$ and $B(G)=[b_{ij}]_{\{i,j\}\in E}$.
Let $B=[b_{ij}] \in \mathfrak{p}^{+}[B(G)]$. Since $B>0$ and $C>0$, we have $B+C>0$. Since $b_{ij}+c_{ij}= a_{ij}$ for all $\{i,j\} \in E$, it follows that $B+C \in \mathfrak{p}^{+}[A(G)]$. Since $B$ is arbitrary, it holds that $B+C \in \mathfrak{p}^{+}[A(G)]$ for all $B \in \mathfrak{p}^{+}[B(G)]$.
Thus, $C=(B+C)-B \in \mathfrak{p}^{+}[A(G)] - \mathfrak{p}^{+}[B(G)]$.
\end{proof}
\end{lemma}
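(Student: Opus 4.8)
The plan is to establish the inclusion elementwise: I would fix an arbitrary $C \in \mathfrak{p}^{+}[A(G)-B(G)]$ and exhibit it as a difference $M - N$ with $M \in \mathfrak{p}^{+}[A(G)]$ and $N \in \mathfrak{p}^{+}[B(G)]$. If $\mathfrak{p}^{+}[A(G)-B(G)]$ is empty, the inclusion holds vacuously, so I may assume such a $C$ exists. By definition $C$ is positive definite and satisfies $c_{ij} = a_{ij} - b_{ij}$ for every edge $\{i,j\} \in E$, where $A(G) = [a_{ij}]_{\{i,j\}\in E}$ and $B(G) = [b_{ij}]_{\{i,j\}\in E}$. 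The idea is to ``anchor'' $C$ against an arbitrary genuine completion of $B(G)$ and exploit the fact that positive definiteness is preserved under addition.

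First I would invoke the completability of $G$ together with Remark \ref{R:property} to guarantee that $\mathfrak{p}^{+}[B(G)]$ is nonempty, and pick any $B = [b_{ij}] \in \mathfrak{p}^{+}[B(G)]$. Next I would form the matrix $B + C$ and verify that it lies in $\mathfrak{p}^{+}[A(G)]$: it is positive definite as the sum of two positive definite matrices, and on each specified entry it satisfies $(B+C)_{ij} = b_{ij} + c_{ij} = b_{ij} + (a_{ij}-b_{ij}) = a_{ij}$, so it is a positive definite completion of $A(G)$. Writing $C = (B+C) - B$ then displays $C$ as an element of $\mathfrak{p}^{+}[A(G)] - \mathfrak{p}^{+}[B(G)]$, which completes the argument.

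The argument is short, and its only genuine ingredient is the choice of an anchor completion $B$, whose existence is precisely where completability of $G$ enters. The point I expect to require care is conceptual rather than computational: the inclusion is deliberately one-sided, and this asymmetry is essential. The forward direction works because one \emph{adds} the positive definite matrix $C$ to $B$, and the positive cone is closed under addition; the reverse inclusion would ask that for every $M \in \mathfrak{p}^{+}[A(G)]$ and $N \in \mathfrak{p}^{+}[B(G)]$ the difference $M - N$ be a positive definite completion of $A(G)-B(G)$, which fails in general since $M - N$ need not be positive definite. Thus the main subtlety is simply recognizing that the proof must proceed in the direction that preserves positivity, and no finer estimate or structural property of the chordal graph is needed beyond the nonemptiness of the completion set.
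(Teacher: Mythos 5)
Your proof is correct and follows essentially the same route as the paper's: dispose of the empty case vacuously, take $C \in \mathfrak{p}^{+}[A(G)-B(G)]$, anchor it against an arbitrary $B \in \mathfrak{p}^{+}[B(G)]$, verify $B+C \in \mathfrak{p}^{+}[A(G)]$, and write $C=(B+C)-B$. Your explicit appeal to completability for the nonemptiness of $\mathfrak{p}^{+}[B(G)]$ is a small point the paper leaves implicit, but otherwise the arguments coincide.
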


\begin{remark}
\begin{itemize}
\item[(i)] For a completable graph $G$, assume that $A(G)$ and $B(G)$ are partial positive semi-definite matrices. Then $\mathfrak{p}[A(G)-B(G)] \subset \mathfrak{p}[A(G)] - \mathfrak{p}[B(G)]$ by following the proof of Lemma \ref{lemma_order} similarly to positive semi-definite completions.

\item[(ii)] Since $\mathfrak{p}^{+}[A(G)] - \mathfrak{p}^{+}[B(G)]$ may or may not include an element which is not a positive definite matrix,
in general, $\mathfrak{p}^{+}[A(G)-B(G)] \neq \mathfrak{p}^{+}[A(G)] - \mathfrak{p}^{+}[B(G)]$.
For example, consider the following partial positive definite matrices.
\begin{equation*}
A(G)=
\begin{bmatrix}
2.5 & 1 & ?\\
1 & 2.5 & 1\\
? & 1 &  2.5\\
\end{bmatrix},
\quad
B(G)=
\begin{bmatrix}
2 & 1 & ?\\
1 & 2 & 1\\
? & 1 &  2\\
\end{bmatrix}
\end{equation*}
Then it is clear that
\begin{equation*}
A=
\begin{bmatrix}
2.5 & 1 & 1\\
1 & 2.5 & 1\\
1 & 1 &  2.5\\
\end{bmatrix}
\in \mathfrak{p}[A(G)],
\quad
B=
\begin{bmatrix}
2 & 1 & 0\\
1 & 2 & 1\\
0 & 1 &  2\\
\end{bmatrix}
\in \mathfrak{p}[B(G)],
\end{equation*}
and
\begin{equation*}
A-B=
\begin{bmatrix}
0.5 & 0 & 1\\
0 & 0.5 & 0\\
1 & 0 &  0.5\\
\end{bmatrix}
\in \mathfrak{p}^{+}[A(G)] - \mathfrak{p}^{+}[B(G)].
\end{equation*}
However, $A-B \notin \mathfrak{p}^{+}[A(G)-B(G)]$ since $A-B$ is not positive definite.
This shows that $A(G)<B(G)$ does not imply $A<B$ for all $A\in \mathfrak{p}^+[A(G)]$ and $B\in \mathfrak{p}^+[B(G)]$.
\end{itemize}
\end{remark}

\begin{theorem}
Let $G$ be a given completable graph. Suppose that $A(G)$ and $B(G)$ are $G$-partial matrices with $0 < B(G) < A(G)$. Then
$$ \mathfrak{p}^{+}[A(G)-B(G))] = (\mathfrak{p}^{+}[A(G)]-\mathfrak{p}^{+}[B(G)]) \cap \mathbb{P}. $$
That is, all positive completions of $A(G)-B(G)$ are expressed as differences of positive completions of $A(G)$ and $B(G)$.
\end{theorem}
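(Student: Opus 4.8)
The plan is to establish the claimed set equality by proving the two inclusions separately, using Lemma \ref{lemma_order} for one direction and an unpacking of the completion conditions on specified entries for the other. Before either inclusion, I would record a small preliminary fact that the hypothesis guarantees. By Definition \ref{D:order}, the assumption $0 < B(G) < A(G)$ means precisely that $B(G) \in \mathbb{P}(G)$ and $A(G) - B(G) \in \mathbb{P}(G)$, i.e.\ both $B(G)$ and $A(G)-B(G)$ are partial positive definite. I would then verify that $A(G)$ itself is partial positive definite: writing $A(G) = B(G) + (A(G)-B(G))$, on every clique $C$ of $G$ the principal submatrix $[A(G)]_{i,j \in C}$ is the sum of the positive definite submatrices $[B(G)]_{i,j \in C}$ and $[A(G)-B(G)]_{i,j \in C}$, hence positive definite; so $A(G) \in \mathbb{P}(G)$. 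This confirms the hypotheses of Lemma \ref{lemma_order} are met for the pair $A(G)$, $B(G)$.

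For the inclusion $\mathfrak{p}^{+}[A(G)-B(G)] \subseteq (\mathfrak{p}^{+}[A(G)]-\mathfrak{p}^{+}[B(G)]) \cap \mathbb{P}$, I would invoke Lemma \ref{lemma_order} directly to obtain $\mathfrak{p}^{+}[A(G)-B(G)] \subseteq \mathfrak{p}^{+}[A(G)]-\mathfrak{p}^{+}[B(G)]$. Since, by definition, every element of $\mathfrak{p}^{+}[A(G)-B(G)]$ is a positive definite completion and hence lies in $\mathbb{P}$, the set $\mathfrak{p}^{+}[A(G)-B(G)]$ is contained in the intersection, giving this direction at once.

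For the reverse inclusion, I would take an arbitrary $X \in (\mathfrak{p}^{+}[A(G)]-\mathfrak{p}^{+}[B(G)]) \cap \mathbb{P}$, so that $X = P - Q$ for some $P \in \mathfrak{p}^{+}[A(G)]$ and $Q \in \mathfrak{p}^{+}[B(G)]$, with $X \in \mathbb{P}$. The key observation is that on each specified edge $\{i,j\} \in E$ one has $X_{ij} = P_{ij} - Q_{ij} = a_{ij} - b_{ij} = [A(G)-B(G)]_{ij}$, since $P$ and $Q$ agree with $A(G)$ and $B(G)$ respectively on $E$. Thus $X$ is a completion of $A(G)-B(G)$, and since $X$ is positive definite it belongs to $\mathfrak{p}^{+}[A(G)-B(G)]$. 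Combining the two inclusions yields the equality.

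This argument is essentially definitional once Lemma \ref{lemma_order} is in hand, so there is no deep obstacle; the only point requiring care is the preliminary verification that $A(G)$ is partial positive definite, which is what licenses the application of Lemma \ref{lemma_order}. The remainder reduces to bookkeeping on the specified entries and the observation that intersecting with $\mathbb{P}$ encodes exactly the positive definiteness required of a completion.
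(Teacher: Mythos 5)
Your proposal is correct and follows essentially the same route as the paper: one inclusion via Lemma \ref{lemma_order} plus the trivial observation that positive definite completions lie in $\mathbb{P}$, and the reverse inclusion by checking that $P-Q$ agrees with $A(G)-B(G)$ on the specified entries and is positive definite by assumption. Your preliminary verification that $A(G)$ is itself partial positive definite is a small point of care the paper leaves implicit, but it does not change the argument.
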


\begin{proof}
Let $C\in \mathfrak{p}^{+}[A(G)-B(G)]$. Then by Lemma \ref{lemma_order} it follows that $C \in \mathfrak{p}^{+}[A(G)] - \mathfrak{p}^{+}[B(G)]$. Since $C$ is positive definite, it follows that $C \in (\mathfrak{p}^{+}[A(G)] - \mathfrak{p}^{+}[B(G)])\cap \mathbb{P} $, so $\mathfrak{p}^{+}[A(G)-B(G)] \subset
(\mathfrak{p}^{+}[A(G)] - \mathfrak{p}^{+}[B(G)])\cap \mathbb{P}$. Let $D \in (\mathfrak{p}^{+}[A(G)] - \mathfrak{p}^{+}[B(G)]) \cap \mathbb{P}$. Then there exist $M \in \mathfrak{p}^{+}[A(G)]$ and $N \in \mathfrak{p}^{+}[B(G)]$ such that $D = M-N$ is positive definite.
Since $M-N \in \mathfrak{p}^{+}[A(G)-B(G)]$, we have that $D \in \mathfrak{p}^{+}[A(G)-B(G)]$.
\end{proof}

\section{Maximizing the determinant}\label{sec:maxdet}

For a given completable graph $G$, we see in this section several interesting consequences for positive definite completions of $G$-partial positive matrices maximizing the determinant with the previous notions of geometric mean and order relation.

\begin{lemma}\cite[Lemma 1]{GJSW84}\label{lemma:concave}
The function $f(A)=\log{\det{(A)}}$ is strictly concave on $\mathfrak{p}^{+}[A(G)]$.
\end{lemma}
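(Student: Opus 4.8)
The plan is to prove strict concavity of $f(A) = \log \det(A)$ on the convex set $\mathfrak{p}^{+}[A(G)]$ by reducing it to the well-known strict concavity of $\log\det$ on the full cone $\mathbb{P}$ and then checking that the restriction to $\mathfrak{p}^{+}[A(G)]$ does not accidentally collapse strict inequality into equality. First I would recall that by Remark \ref{R:property} the set $\mathfrak{p}^{+}[A(G)]$ is convex, so the statement is meaningful: for any two distinct completions $M, N \in \mathfrak{p}^{+}[A(G)]$ and any $\lambda \in (0,1)$, the segment $(1-\lambda)M + \lambda N$ stays inside $\mathfrak{p}^{+}[A(G)]$, and I must show
\begin{equation*}
\log\det\big((1-\lambda)M + \lambda N\big) > (1-\lambda)\log\det M + \lambda \log\det N.
\end{equation*}

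The core analytic step is the standard fact that $A \mapsto \log\det A$ is strictly concave on the open cone $\mathbb{P}$ of positive definite matrices. I would establish this by restricting to the line $t \mapsto g(t) = \log\det(M + t(N-M))$ for $t \in [0,1]$ and differentiating twice. Writing $H = N - M$ (a nonzero Hermitian matrix since $M \neq N$), and $A_t = M + tH > 0$, one computes $g'(t) = \tr(A_t^{-1} H)$ and then
\begin{equation*}
g''(t) = -\tr(A_t^{-1} H A_t^{-1} H) = -\tr\big((A_t^{-1/2} H A_t^{-1/2})^2\big) = -\Vert A_t^{-1/2} H A_t^{-1/2}\Vert_2^2.
\end{equation*}
This is strictly negative whenever $H \neq 0$, since $A_t^{-1/2} H A_t^{-1/2}$ is then a nonzero Hermitian matrix and its Frobenius norm is positive. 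Strict negativity of $g''$ on $[0,1]$ gives strict concavity of $g$, which is exactly the desired strict inequality after setting $t = \lambda$.

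The only subtlety specific to the partial-matrix setting is making sure the two completions $M$ and $N$ being \emph{distinct as full matrices} is automatic from the hypothesis, and that no degeneracy is introduced by the completion constraints; but this is immediate because $\mathfrak{p}^{+}[A(G)] \subset \mathbb{P}$ is simply a convex subset of the full cone, and strict concavity on $\mathbb{P}$ restricts to strict concavity on any convex subset. Thus I would organize the proof as: (i) cite convexity of $\mathfrak{p}^{+}[A(G)]$ from Remark \ref{R:property}; (ii) reduce to the one-variable function $g(t)$ along the segment; (iii) carry out the two derivative computations to obtain $g''(t) = -\Vert A_t^{-1/2} H A_t^{-1/2}\Vert_2^2 < 0$; and (iv) conclude strict concavity. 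I do not anticipate a genuine obstacle here — the main point requiring care is the second-derivative identity $\tr(A_t^{-1} H A_t^{-1} H) = \Vert A_t^{-1/2} H A_t^{-1/2}\Vert_2^2$ and the observation that $H \neq 0$ forces this to be strictly positive, which is what upgrades concavity to strict concavity.
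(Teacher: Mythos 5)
Your argument is correct, but note that the paper itself gives no proof of this lemma at all: it is imported verbatim as Lemma~1 of \cite{GJSW84}, so there is nothing in the text to match your derivation against. What you supply is the standard self-contained proof: restrict to a segment, use Jacobi's formula to get $g'(t)=\tr(A_t^{-1}H)$, differentiate again to get $g''(t)=-\tr\bigl((A_t^{-1/2}HA_t^{-1/2})^2\bigr)=-\Vert A_t^{-1/2}HA_t^{-1/2}\Vert_2^2<0$ for $H\neq 0$, and observe that strict concavity on $\mathbb{P}$ restricts to strict concavity on the convex subset $\mathfrak{p}^{+}[A(G)]$ (whose convexity is Remark~\ref{R:property} / Theorem~\ref{T:property}). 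All steps check out; the only cosmetic caveat is that the original lemma in \cite{GJSW84} is phrased as strict concavity in the \emph{unspecified entries}, i.e.\ along the affine slice of completions, which is exactly the restriction you describe, so your version subsumes it. One could quibble that when $\mathfrak{p}^{+}[A(G)]$ is empty or a single point the statement is vacuous, but your proof handles the nontrivial case ($M\neq N$) correctly, which is all that is needed.
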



\begin{theorem}\label{theorem:maxdet}
Let $G$ be a given completable graph, and let $A(G)$ be a $G$-partial matrix with $A(G) > 0$. Then there exists a unique positive definite completion of $A(G)$, say $\widehat{A}$, such that
\begin{equation*}
\det(\widehat{A}) = \max{\{ \det(M): M \in \mathfrak{p}^{+}[A(G)] \}}.
\end{equation*}
Furthermore, $\widehat{A}$ is the unique positive definite completion of $A(G)$ whose inverse $C = [c_{ij}]$ satisfies
$$ c_{ij}=0 \quad \textit{for all } \{ i,j \} \notin E. $$
\end{theorem}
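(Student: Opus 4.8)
The plan is to treat this as the classical maximum-determinant completion result: first secure existence and uniqueness of the determinant maximizer by a compactness-plus-strict-concavity argument, and then read off the zero-pattern of its inverse from the first-order optimality condition, finally inverting that condition to get the uniqueness clause.

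For existence I would work on the compact set $\mathfrak{p}[A(G)]$ of positive semi-definite completions, which is nonempty and compact by Theorem~\ref{T:property} (note $A(G) > 0$ is in particular partial positive semi-definite). Since $\det$ is continuous it attains a maximum on $\mathfrak{p}[A(G)]$, and because $A(G) > 0$ and $G$ is completable the set $\mathfrak{p}^{+}[A(G)]$ is nonempty by Remark~\ref{R:property}, so there is a positive definite completion $M_0$ with $\det M_0 > 0$; hence the maximal value is strictly positive. Any maximizer $M^{\ast}$ is positive semi-definite with $\det M^{\ast} > 0$, so it is in fact positive definite, i.e. $M^{\ast} \in \mathfrak{p}^{+}[A(G)]$, which shows the maximum over $\mathfrak{p}^{+}[A(G)]$ is attained. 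For uniqueness, since $\log$ is strictly increasing, maximizing $\det$ over $\mathfrak{p}^{+}[A(G)]$ is equivalent to maximizing $f(M) = \log\det M$, which is strictly concave on the convex set $\mathfrak{p}^{+}[A(G)]$ by Lemma~\ref{lemma:concave}; a strictly concave function on a convex set has at most one maximizer, so $\widehat{A}$ is unique.

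Next I would derive the characterization of $C = \widehat{A}^{-1}$ from the first-order condition. The admissible perturbations keeping us inside the affine family of completions are exactly the Hermitian matrices $H$ supported on the unspecified positions, i.e. $H_{ij} = 0$ whenever $\{i,j\} \in E$; since $G$ contains all loops, such $H$ have zero diagonal. Using the derivative identity $\frac{d}{ds}\log\det(\widehat{A} + sH)\big|_{s=0} = \tr(\widehat{A}^{-1} H)$, interior optimality forces $\tr(C H) = 0$ for every such $H$. Testing against $H = E_{kl} + E_{lk}$ and $H = i(E_{kl} - E_{lk})$ for each unspecified pair $\{k,l\} \notin E$, and using that $C$ is Hermitian, yields $\operatorname{Re} c_{kl} = \operatorname{Im} c_{kl} = 0$, hence $c_{ij} = 0$ for all $\{i,j\} \notin E$.

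Finally, for the uniqueness clause I would run the first-order argument in reverse: if $B \in \mathfrak{p}^{+}[A(G)]$ has inverse vanishing on the unspecified positions, then $\tr(B^{-1} H) = 0$ for every admissible $H$, so $B$ is a critical point of the concave function $f$ on $\mathfrak{p}^{+}[A(G)]$; a critical point of a concave function on a convex set is its global maximizer, and by the uniqueness already established this forces $B = \widehat{A}$. The main obstacle I anticipate is the existence step, specifically ruling out that the supremum is approached only on the singular boundary, which is why I route the argument through the compact set $\mathfrak{p}[A(G)]$ and exploit the strict positivity of the optimal value; the first-order computation is then routine, the only care needed being the correct identification of the admissible Hermitian, off-support perturbation directions.
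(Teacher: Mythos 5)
Your proof is correct and follows exactly the route the paper intends: it combines the compactness/convexity of $\mathfrak{p}[A(G)]$ from Theorem~\ref{T:property} with the strict concavity of $\log\det$ from Lemma~\ref{lemma:concave}, and the first-order perturbation argument for the zero pattern of $\widehat{A}^{-1}$ is precisely the content of the proof in \cite{GJSW84} that the paper cites without reproducing. You have simply written out in full the details the paper delegates to that reference.
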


\begin{proof}
It follows by Lemma \ref{lemma:concave} amd Theorem \ref{T:property} (see \cite{GJSW84}).
\end{proof}

Now we investigate relationship between $\widehat{A}$ and other positive definite completions for their determinants.

\begin{theorem}
Let $G$ be a given completable graph, and let $A(G)$ be a $G$-partial matrix with $A(G) > 0$. For $0 < k < \det(\widehat{A})$, there exists $A \in \mathfrak{p}^{+}[A(G)]$ such that $\det(A) = k$.
\end{theorem}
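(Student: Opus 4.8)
The plan is to exploit the continuity of the determinant together with the convexity and boundedness of $\mathfrak{p}^{+}[A(G)]$, producing a one-parameter family of positive definite completions whose determinants sweep out the whole interval $(0,\det(\widehat{A}))$. First I would record that, by Remark \ref{R:property} (together with Theorem \ref{T:property}), the set $\mathfrak{p}^{+}[A(G)]$ is nonempty, convex, and bounded; in particular it is connected, so the continuous image $\det\big(\mathfrak{p}^{+}[A(G)]\big)$ is an interval contained in $(0,\infty)$. Its supremum equals $\det(\widehat{A})$ and is attained, by Theorem \ref{theorem:maxdet}. Hence it suffices to show that this interval has infimum $0$, after which the intermediate value theorem finishes the argument.

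To that end I would choose a direction of movement inside the affine slice of completions. Assuming $G$ is not complete (otherwise $\mathfrak{p}^{+}[A(G)]=\{\widehat{A}\}$ is a single point and there is no $k<\det(\widehat{A})$ to realize), fix a missing edge $\{i,j\}\notin E$ and let $H\in\mathbb{H}$ be the Hermitian matrix that vanishes everywhere except at the positions $(i,j)$ and $(j,i)$. Since the diagonal is always specified, $H$ is supported off the specified entries, so $\widehat{A}+sH$ agrees with $A(G)$ on every specified entry and is a completion for every $s\in\R$. Consider $I_{0}=\{\,s\ge 0:\widehat{A}+sH\in\mathfrak{p}^{+}[A(G)]\,\}$. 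By convexity of $\mathfrak{p}^{+}[A(G)]$ and openness of $\mathbb{P}$, $I_{0}$ is a relatively open interval containing $0$, hence $I_{0}=[0,s^{*})$, and boundedness of $\mathfrak{p}^{+}[A(G)]$ forces $s^{*}<\infty$.

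The key step is to identify the determinant at the exit parameter $s^{*}$. Since $\widehat{A}+sH$ is positive definite for every $s\in[0,s^{*})$, letting $s\uparrow s^{*}$ gives $\widehat{A}+s^{*}H\in\overline{\mathbb{P}}$, i.e.\ it is positive semi-definite; as $s^{*}\notin I_{0}$ it is not positive definite, hence singular, so $\det(\widehat{A}+s^{*}H)=0$. Now $g(s):=\det(\widehat{A}+sH)$ is continuous on $[0,s^{*}]$ with $g(0)=\det(\widehat{A})$ and $g(s^{*})=0$, so for any $0<k<\det(\widehat{A})$ the intermediate value theorem yields $s_{0}\in(0,s^{*})$ with $g(s_{0})=k$, and $\widehat{A}+s_{0}H\in\mathfrak{p}^{+}[A(G)]$ is the required completion. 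The main obstacle is precisely this last identification: one must avoid assuming that $\mathfrak{p}^{+}[A(G)]$ is dense in $\mathfrak{p}[A(G)]$ (which Remark \ref{R:property} explicitly leaves open), and instead obtain a determinant-zero boundary matrix by staying on the line inside the affine slice, using boundedness to guarantee the finite exit parameter $s^{*}$ and continuity of $\det$ to pass to the limit.
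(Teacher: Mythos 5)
Your proof is correct, and it shares the paper's top-level strategy --- a continuity/intermediate-value argument over a connected family of completions, anchored at $\widehat{A}$ on one end and at determinant zero on the other --- but the key step of producing the determinant-zero endpoint is carried out differently. The paper works in the compact convex set $\mathfrak{p}[A(G)]$ of positive semi-definite completions: it takes a completion $Z$, moves one missing entry to the $(1,n)$ corner by permutation similarity, replaces it by $\sqrt{ab}$ so that the $2\times 2$ principal minor on rows and columns $1$ and $n$ vanishes, and invokes a cited positivity result to conclude the resulting matrix is still positive semi-definite and singular; connectedness of $\mathfrak{p}[A(G)]$ then yields every value in $[0,\det(\widehat{A})]$, and any value $k>0$ is automatically attained by a positive \emph{definite} completion. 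You instead stay inside $\mathfrak{p}^{+}[A(G)]$ and travel along the line $\widehat{A}+sH$ in a missing-entry direction until it exits the open cone $\mathbb{P}$; boundedness of the completion set (Theorem \ref{T:property}) guarantees a finite exit parameter $s^{*}$, and the limit matrix is positive semi-definite but not definite, hence singular. Your route is self-contained (no appeal to the external positivity lemma) and, as you rightly flag, it sidesteps the question left open in Remark \ref{R:property} of whether $\mathfrak{p}^{+}[A(G)]$ is dense in $\mathfrak{p}[A(G)]$. Two small points: you should fix a concrete nonzero $H$ (say $1$ in the $(i,j)$ and $(j,i)$ positions and zero elsewhere), and your parenthetical about the complete-graph case is slightly misstated --- when $G$ is complete there certainly exist values $k\in(0,\det(\widehat{A}))$, they simply cannot be realized, so the theorem tacitly assumes at least one missing off-diagonal entry; the paper's own proof makes the same tacit assumption when it says ``pick one missing entry.''
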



\begin{proof}
Since $\mathfrak{p}[A(G)]$ is convex, it is also path-connected, hence connected.
Since the determinant,
$\det:\mathfrak{p}[A(G)] \rightarrow \R$, is a continuous function on $\mathfrak{p}[A(G)]$, the range of the determinant function is connected.
By the inequality of Hadamard, it follows that
$$0 \leq \det(A) \leq \prod_{i=1}^n a_{ii} \quad \text{for all } A\in \mathfrak{p}[A(G)]. $$
So,
$\det(\mathfrak{p}[A(G)])$ is bounded in $\R$, i.e., $\det(\mathfrak{p}[A(G)]) = [a,b]$ for some $0 \leq a< b$.
Claim that $a=0$.
It is enough to show that there exist $S\in \mathfrak{p}[A(G)]$ such that $\det(S)=0$.
%

Let $Z\in\mathfrak{p}[A(G)]$ with $\det{(Z)}\neq 0$. Pick one missing entry of $A(G)$, say $x$. We fix all entries of $Z$ except $x$ and convert $Z$ into the block matrix of the form \eqref{block_1missing} via permutation similarity to place $x$ in the $(1,n)$ spot.
Set $\tilde{Z}=[\tilde{z}_{ij}]$ as the matrix obtained by taking $x=\sqrt{ab}$, where $a,b$ are entries in the $(1,1)$ and $(n,n)$ spots, respectively.  Since the determinant of the principal submatrix $\tilde{Z}[1,n]$ is zero,  $\det{(\tilde{Z})}=0$.
By Proposition 2.3 in \cite{Curto1993}, $\tilde{Z}\geq 0$, so $\tilde{Z} \in \mathfrak{p}[A(G)]$.
\end{proof}


Using the similar proof of \cite[Theorem 1]{DJ05}, one can have an integral representation for determinants of two positive definite completion of a $G$-partial matrix.

\begin{theorem}\label{theorem:det_relationship}
Let $\mathcal{C}$ be a nonempty convex subset of $\mathbb{P}$.
Let ${A}_0$ and ${A}_1$ be in $\mathcal{C}$. Define
${A}(\lambda) :=(1-\lambda){A}_0 + \lambda {A}_1 \quad \text{for all } \lambda \in [0,1].$
Then,
\begin{equation*}
\det({A}_1)=\det({A}_0)  \exp{\left\{ \int_0^1 \tr({A}(\lambda)^{-1} ({A}_1-{A}_0)) d\lambda \right\}}.
\end{equation*}
\end{theorem}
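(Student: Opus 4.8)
The plan is to reduce the claimed multiplicative identity to the fundamental theorem of calculus applied to the scalar function $g(\lambda) := \log\det(A(\lambda))$. First I would observe that since $\mathbb{P}$ is an open convex cone and $A_0, A_1 \in \mathcal{C} \subset \mathbb{P}$, the whole segment $A(\lambda) = (1-\lambda) A_0 + \lambda A_1$ lies in $\mathbb{P}$ for every $\lambda \in [0,1]$. Hence each $A(\lambda)$ is invertible with $\det(A(\lambda)) > 0$, so $g$ is well-defined and smooth on $[0,1]$, and the integrand $\tr(A(\lambda)^{-1}(A_1 - A_0))$ is continuous there; this guarantees both the logarithm and the integral make sense.

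Next I would compute $g'(\lambda)$ via Jacobi's formula for the derivative of a determinant. Since $A(\lambda)$ is affine in $\lambda$, its derivative is the constant matrix $A'(\lambda) = A_1 - A_0$. Jacobi's identity gives $\frac{d}{d\lambda}\det(A(\lambda)) = \det(A(\lambda)) \, \tr(A(\lambda)^{-1} A'(\lambda))$; dividing by $\det(A(\lambda)) > 0$ yields
\begin{equation*}
g'(\lambda) = \tr\!\left( A(\lambda)^{-1}(A_1 - A_0) \right).
\end{equation*}
Integrating from $0$ to $1$ and using the fundamental theorem of calculus, $g(1) - g(0) = \int_0^1 \tr(A(\lambda)^{-1}(A_1 - A_0)) \, d\lambda$, which is exactly $\log\det(A_1) - \log\det(A_0)$. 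Exponentiating both sides gives the stated formula.

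The only step requiring genuine care is the derivative identity. I would justify it either by appealing to the standard Jacobi formula or by expanding directly: writing $\det(A(\lambda + h)) = \det(A(\lambda)) \det\!\left( I + h\, A(\lambda)^{-1}(A_1 - A_0) \right)$ and using the first-order expansion $\det(I + hM) = 1 + h \tr(M) + o(h)$. This is the main (and essentially the only) obstacle; everything else is bookkeeping. I would also remark that the convexity hypothesis on $\mathcal{C}$ is used only to keep $A(\lambda)$ inside $\mathcal{C}$ (which matters for the intended application to completion sets); for the identity itself the essential fact is merely that the segment stays in $\mathbb{P}$, which already follows from convexity of the cone.
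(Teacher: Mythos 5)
Your proof is correct and follows essentially the same route as the paper's: both hinge on Jacobi's formula (the paper cites it as the ``trace theorem'') to get $\frac{d}{d\lambda}\log\det(A(\lambda)) = \tr\!\left(A(\lambda)^{-1}(A_1-A_0)\right)$, then integrate over $[0,1]$ and exponentiate. Your preliminary remark that the segment stays in $\mathbb{P}$ (so the determinant is positive and the inverse exists) is a small but welcome addition of rigor that the paper leaves implicit.
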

\begin{proof}
Let ${M}(\lambda)=[m_{ij}(\lambda)]_{ij}$ be a $n\times n$ Hermitian matrix whose entries $m_{ij}(\lambda)$ are functions of a parameter $\lambda$ on $I=[a,b]$ and its determinant be denoted by $\Delta(\lambda)$.
Assume that the functions $m_{ij}(\lambda)$ are differentiable on $I$ for all $i,j$ and $\det{({M}(\lambda))} \neq 0$ for all $\lambda \in I$.
Then by the trace theorem \cite[p. 83]{LAN02},
it follows that
\begin{equation}\label{trace}
\frac{d \Delta(\lambda)}{d\lambda} = \Delta(\lambda)
\tr\bigg( {M}(\lambda)^{-1}\frac{d {M}(\lambda)}{d\lambda} \bigg),
\end{equation}
where
$\dfrac{d{M}(\lambda)}{d\lambda}$
is the matrix whose elements are $\dfrac{d(m_{ij}(\lambda))}{d\lambda}$.
Clearly, ${A}(\lambda) \subset\mathcal{C} \subset \mathbb{P}$ for all $\lambda\in[0,1]$.
Since $\dfrac{d{A}(\lambda)}{d\lambda} = {A}_1 - {A}_0$, by \eqref{trace} it follows that
\begin{equation}\label{trace1}
\frac{d \Delta(\lambda)}{d\lambda} = \Delta(\lambda)
\tr\bigg( {A}(\lambda)^{-1} ({A}_1-{A}_0) \bigg).
\end{equation}
Since $\Delta(\lambda) \neq 0$, the equation \eqref{trace1} can be rewritten as
\begin{equation}\label{trace2}
\frac{d \Delta(\lambda)}{\Delta(\lambda)} =
\tr\bigg( {A}(\lambda)^{-1} ({A}_1-{A}_0) \bigg) d\lambda.
\end{equation}
Since ${A}(\lambda)^{-1}$ is continuous, $\tr{({A}(\lambda)^{-1} ({A}_1-{A}_0))}$ is a continuous function of $\lambda$ on $[0,1]$. Also,
$\Delta(\lambda)$ is a polynomial of $\lambda$.
By integrating both sides of the equation \eqref{trace2} we have
\begin{equation*}
\ln{ (|\Delta(1)/\Delta(0)| )}=
\int_0^1 \tr({A}(\lambda)^{-1} ({A}_1-{A}_0) ) d\lambda.
\end{equation*}
Note that $\Delta(0)=\det({A}_0)$ and $\Delta(1)=\det({A}_1)$.
\end{proof}

Consider a zero-mean, multivariate Gaussian distribution on $\R^n$,
\begin{equation*}
f({x})=(2\pi)^{-n/2} |{\Sigma}|^{-1/2} \exp{\bigg\{-\frac{1}{2} {x}^\top {\Sigma}^{-1} {x}\bigg\}},
\end{equation*}
where the covariance matrix ${\Sigma}(G)$ is partial positive definite with a completable graph $G$.
Recall that $H(f)$ in \eqref{e:entropy} is the Shannon entropy.
\begin{theorem}
Let $f_0,f_1$ be zero-mean, mutivariate Gaussian distributions on $\R^n$ with covariance matrices
${\Sigma}_0, {\Sigma}_1 \in \mathfrak{p}^{+}[{\Sigma}(G)]$, respectively.
Then the following are true:
\begin{itemize}
\item[(1)] the difference between Shannon entropy for two distributions can be expressed as
\begin{equation*}
    H(f_1)-H(f_0) = \frac{1}{2} \int_0^1 \tr({\Sigma}(\lambda)^{-1} ({\Sigma}_1-{\Sigma}_0)) d\lambda,
\end{equation*}
where ${\Sigma}(\lambda) = (1-\lambda){\Sigma}_0 +\lambda {\Sigma}_1$.
\item[(2)] for a zero-mean, multivariate Gaussian distribution on $\R^n$ with the covariance matrix ${\Sigma}_0 \#_t {\Sigma}_1$, says $f_0 \#_t f_1$, it holds that
\begin{equation*}
H(f_0 \#_t f_1) = (1-t)H(f_0) + tH(f_1).
\end{equation*}
\end{itemize}

\end{theorem}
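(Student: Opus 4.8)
The plan is to derive both identities directly from the entropy formula \eqref{e:entropy}, namely $H(f) = \tfrac{1}{2}\log(\det \Sigma) + \tfrac{1}{2}n(1+\log(2\pi))$, combined with two results already in hand: the integral representation of Theorem \ref{theorem:det_relationship} for part (1), and the multiplicativity of the determinant under the geometric mean, Theorem \ref{T:geomean} (9), for part (2). Neither part requires new machinery; each reduces to substituting the relevant covariance matrix into the entropy formula and bookkeeping the dimension-dependent constant.

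For part (1), I would first subtract the two entropies. Because the additive constant $\tfrac{1}{2}n(1+\log(2\pi))$ does not depend on the covariance matrix, it cancels, leaving
\[
H(f_1) - H(f_0) = \frac{1}{2}\left(\log\det\Sigma_1 - \log\det\Sigma_0\right) = \frac{1}{2}\log\frac{\det\Sigma_1}{\det\Sigma_0}.
\]
I would then apply Theorem \ref{theorem:det_relationship} with the convex set $\mathcal{C} = \mathbb{P}$, which contains both $\Sigma_0$ and $\Sigma_1$ since $\mathfrak{p}^{+}[\Sigma(G)] \subset \mathbb{P}$, taking $A_0 = \Sigma_0$ and $A_1 = \Sigma_1$ so that $A(\lambda) = \Sigma(\lambda)$. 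Taking the logarithm of the resulting identity $\det\Sigma_1 = \det\Sigma_0\,\exp\{\int_0^1 \tr(\Sigma(\lambda)^{-1}(\Sigma_1-\Sigma_0))\,d\lambda\}$ gives $\log(\det\Sigma_1/\det\Sigma_0) = \int_0^1 \tr(\Sigma(\lambda)^{-1}(\Sigma_1-\Sigma_0))\,d\lambda$, and halving the displayed difference yields exactly the claimed formula.

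For part (2), I would substitute the covariance matrix $\Sigma_0 \#_t \Sigma_1$ into \eqref{e:entropy}, obtaining $H(f_0 \#_t f_1) = \tfrac{1}{2}\log\det(\Sigma_0 \#_t \Sigma_1) + \tfrac{1}{2}n(1+\log(2\pi))$. By Theorem \ref{T:geomean} (9) one has $\det(\Sigma_0 \#_t \Sigma_1) = (\det\Sigma_0)^{1-t}(\det\Sigma_1)^{t}$, hence $\log\det(\Sigma_0 \#_t \Sigma_1) = (1-t)\log\det\Sigma_0 + t\log\det\Sigma_1$. Comparing this with the expansion of $(1-t)H(f_0) + tH(f_1)$, the logarithmic terms match term by term, while the constant terms agree because $(1-t) + t = 1$, so the dimension-dependent constant is reproduced with total weight one.

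I do not anticipate a genuine obstacle: both statements are essentially corollaries of the earlier results. The only points meriting a moment's care are verifying the hypotheses of Theorem \ref{theorem:det_relationship}—that $\Sigma_0$ and $\Sigma_1$ lie in a common convex subset of $\mathbb{P}$, which is immediate upon choosing $\mathcal{C} = \mathbb{P}$—and observing, in part (2), that the cancellation of the constant term hinges precisely on the weights summing to one. This last observation is exactly what makes the entropy of the geometric-mean Gaussian equal to the $t$-weighted arithmetic combination of the individual entropies.
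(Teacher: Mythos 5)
Your proposal is correct and follows exactly the paper's own route: part (1) is the entropy formula \eqref{e:entropy} combined with Theorem \ref{theorem:det_relationship} (the paper leaves the substitution implicit, which you spell out), and part (2) is \eqref{e:entropy} combined with Theorem \ref{T:geomean} (9), with the constant term surviving because the weights sum to one. Nothing further is needed.
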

\begin{proof}
(i) By Eq. \eqref{e:entropy} and Theorem \ref{theorem:det_relationship}, it holds.
(ii) by Eq. \eqref{e:entropy} and Theorem \ref{T:geomean} (9), it follows that
\begin{align*}
    H(f_0 \#_t f_1)
    &= \frac{1}{2}\log(\det{{\Sigma}_0 \#_t {\Sigma}_1})+\frac{1}{2}n(1+ \log{(2\pi))}\\
    &= \frac{1}{2}((1-t)\log(\det{{\Sigma}_0}) + t\log(\det{{\Sigma}_1}))
    +\frac{1}{2}n(1+ \log{(2\pi))}\\
    &= (1-t)H(f_0) + tH(f_1).
\end{align*}
\end{proof}
Especially, when $t=1/2$, it shows that entropy of distribution with the geometric mean of two covariances is the average of entropy of two distributions with each covariance.

\begin{corollary}
Let ${A}(G_1)>0$ and ${B}(G_2)>0$ be given partial matrices with completable graphs $G_1$ and $G_2$. Suppose that ${A}(G_1) \#_{t} {B}(G_2)$ is convex.
Then
the determinant of ${A} \#_{t} {B}$ in ${A}(G_1) \#_{t} {B}(G_2)$ can be expressed as
\begin{equation*}
\det(\widehat{{A}})^{1-t} \det(\widehat{{B}})^{t} \cdot \exp{\left\{ \int_0^1 \tr({S}_{t}(\lambda)^{-1} {T}_{t}) d\lambda \right\}},
\end{equation*}
where ${S}_{t}(\lambda)=    (1-\lambda)(\widehat{{A}} \#_{t} \widehat{{B}}) +\lambda ( {A} \#_{t} {B})$  and ${T}_{t} = \widehat{{A}} \#_{t} \widehat{{B}} - {A} \#_{t} {B}$.
\end{corollary}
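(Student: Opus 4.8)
The plan is to recognize this corollary as a direct specialization of the integral representation in Theorem \ref{theorem:det_relationship}, applied to the line segment in $A(G_1) \#_{t} B(G_2)$ whose endpoints are the maximum-determinant geometric mean $\widehat{A} \#_{t} \widehat{B}$ and the arbitrary element $A \#_{t} B$.

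First I would fix notation via Theorem \ref{theorem:maxdet}: $\widehat{A}$ and $\widehat{B}$ are the unique positive definite completions of $A(G_1)$ and $B(G_2)$ of maximal determinant, so in particular $\widehat{A} \in \mathfrak{p}^{+}[A(G_1)]$ and $\widehat{B} \in \mathfrak{p}^{+}[B(G_2)]$. By the definition of the geometric mean of two partial positive definite matrices, both $\widehat{A} \#_{t} \widehat{B}$ and the arbitrary element $A \#_{t} B$ then lie in $A(G_1) \#_{t} B(G_2) = \mathfrak{p}^{+}[A(G_1)] \#_{t} \mathfrak{p}^{+}[B(G_2)]$, and each is positive definite.

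Next I would apply Theorem \ref{theorem:det_relationship} with $\mathcal{C} = A(G_1) \#_{t} B(G_2)$, which is a convex subset of $\mathbb{P}$ precisely by the standing hypothesis, taking the endpoints $A_0 = \widehat{A} \#_{t} \widehat{B}$ and $A_1 = A \#_{t} B$. With this orientation the interpolant $A(\lambda) = (1-\lambda)A_0 + \lambda A_1$ coincides with $S_{t}(\lambda)$ and the increment with $\pm T_{t}$, so that the theorem yields
\begin{equation*}
\det(A \#_{t} B) = \det(\widehat{A} \#_{t} \widehat{B}) \exp\left\{ \int_0^1 \tr(S_{t}(\lambda)^{-1} T_{t})\, d\lambda \right\}.
\end{equation*}
It then remains only to evaluate the prefactor: by the determinant identity in Theorem \ref{T:geomean} (9), $\det(\widehat{A} \#_{t} \widehat{B}) = (\det \widehat{A})^{1-t} (\det \widehat{B})^{t}$, which is exactly the claimed coefficient, completing the argument.

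I expect the only real obstacle to be bookkeeping rather than substance. Two points must be checked. First, one must confirm that the whole segment $S_{t}(\lambda)$ stays inside $\mathbb{P}$, so that $\det(S_{t}(\lambda)) \neq 0$ and Theorem \ref{theorem:det_relationship} applies; this is automatic because $\mathbb{P}$ itself is convex, while the convexity of $A(G_1) \#_{t} B(G_2)$ in the hypothesis is what additionally keeps each interpolant $S_{t}(\lambda)$ within the set of geometric means, so that the path is meaningful inside $A(G_1) \#_{t} B(G_2)$. Second, one must orient the segment consistently with the sign convention $T_{t} = \widehat{A} \#_{t} \widehat{B} - A \#_{t} B$, which is just a matter of deciding which endpoint plays the role of $A_0$ in Theorem \ref{theorem:det_relationship}.
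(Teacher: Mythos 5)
Your strategy is exactly the intended one: the paper states this corollary without a separate proof, and it is meant to follow directly from Theorem \ref{theorem:det_relationship} applied to the segment joining $\widehat{A} \#_{t} \widehat{B}$ and $A \#_{t} B$, together with $\det(\widehat{A} \#_{t} \widehat{B}) = (\det \widehat{A})^{1-t}(\det \widehat{B})^{t}$ from Theorem \ref{T:geomean} (9); your observation that the segment lies in $\mathbb{P}$ regardless of the convexity hypothesis is also correct. The one point you must not leave as ``$\pm T_t$'' is the sign. With the choice $A_0 = \widehat{A} \#_{t} \widehat{B}$ and $A_1 = A \#_{t} B$ (the choice forced by matching $A(\lambda)$ to $S_t(\lambda)$ and by which determinant sits on the left-hand side), the increment in Theorem \ref{theorem:det_relationship} is $A_1 - A_0 = -T_t$, so the derivation yields $\det(A \#_{t} B) = (\det \widehat{A})^{1-t}(\det \widehat{B})^{t} \exp\bigl\{ -\int_0^1 \tr(S_t(\lambda)^{-1} T_t)\, d\lambda \bigr\}$. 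Swapping the endpoints does not repair this: then $A(\lambda) = S_t(1-\lambda)$ and the theorem expresses $\det(\widehat{A} \#_{t} \widehat{B})$ in terms of $\det(A \#_{t} B)$, and after the change of variables $\lambda \mapsto 1-\lambda$ the same minus sign reappears. So your final display (which copies the corollary as printed) does not follow from your own setup with $T_t$ as defined; the statement needs either a negated exponent or $T_t := A \#_{t} B - \widehat{A} \#_{t} \widehat{B}$. Apart from resolving that sign explicitly, the argument is complete.
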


\begin{theorem}[Fischer's Inequality\cite{HJ}]\label{thm:Fischer}
Let a $(m+n)\times (m+n)$ Hermitian positive definite matrix $H$ have the partitioned form
\begin{equation*}
H=
\begin{bmatrix}
A & X \\
X & B
\end{bmatrix},
\end{equation*}
where $A\in M_m$ and $B\in M_n$. Then
$$\det{H}\leq (\det{A})(\det{B})$$
with equality if and only if $X=0$.
\end{theorem}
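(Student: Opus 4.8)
The plan is to reduce the claim to the monotonicity of the determinant on the positive definite cone by means of the Schur complement. Since $H > 0$, its leading principal block $A$ is itself positive definite, hence invertible, and the Hermiticity of $H$ forces the $(2,1)$ block to be $X^{*}$. First I would record the block factorization
\begin{equation*}
H = \begin{bmatrix} I & 0 \\ X^{*} A^{-1} & I \end{bmatrix} \begin{bmatrix} A & 0 \\ 0 & B - X^{*} A^{-1} X \end{bmatrix} \begin{bmatrix} I & A^{-1} X \\ 0 & I \end{bmatrix},
\end{equation*}
which is verified by direct multiplication and, writing $L$ for the lower triangular factor, has the form $H = L D L^{*}$ (using that $A^{-1}$ is Hermitian). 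Taking determinants and using that the outer triangular factors have determinant $1$ gives $\det H = \det A \cdot \det S$, where $S := B - X^{*} A^{-1} X$ is the Schur complement of $A$ in $H$.

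Next I would establish the two positivity facts that drive the inequality. Because $D = L^{-1} H (L^{*})^{-1}$ is congruent to $H$ via the invertible $L$, it is positive definite, so in particular $S > 0$. On the other hand, $A^{-1} > 0$, so $X^{*} A^{-1} X = (A^{-1/2} X)^{*} (A^{-1/2} X) \geq 0$, which yields the Loewner inequality $S = B - X^{*} A^{-1} X \leq B$. Since $0 < S \leq B$, the monotonicity of the determinant on $\overline{\mathbb{P}}$ gives $\det S \leq \det B$, and substituting into the factorization identity produces $\det H = \det A \cdot \det S \leq (\det A)(\det B)$.

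Finally, the equality analysis is the part requiring the most care, and I expect it, rather than the inequality itself, to be the only delicate step. Equality forces $\det S = \det B$ with $0 < S \leq B$. Writing $\det S = \det B \cdot \det(I - B^{-1/2} (X^{*} A^{-1} X) B^{-1/2})$ and setting $D_{0} := B^{-1/2} (X^{*} A^{-1} X) B^{-1/2} \geq 0$, the condition $S > 0$ places every eigenvalue $\mu_{i}$ of $D_{0}$ in $[0,1)$, while $\det(I - D_{0}) = 1$ forces $\prod_{i} (1 - \mu_{i}) = 1$. Since each factor satisfies $0 < 1 - \mu_{i} \leq 1$, all $\mu_{i} = 0$, so $D_{0} = 0$ and hence $X^{*} A^{-1} X = 0$. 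Using $A^{-1} > 0$ once more, $X^{*} A^{-1} X = 0$ means $A^{-1/2} X = 0$, that is, $X = 0$. The converse is immediate, since $X = 0$ makes $H$ block diagonal and $\det H = (\det A)(\det B)$.
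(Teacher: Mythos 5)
Your proof is correct. Note, however, that the paper does not actually prove this statement: Fischer's inequality is imported as a known classical result, cited directly to Horn and Johnson \cite{HJ}, so there is no in-paper argument to compare against. Your Schur-complement route is the standard textbook proof: the congruence $H = LDL^{*}$ gives $\det H = \det A \cdot \det(B - X^{*}A^{-1}X)$, positivity of the Schur complement together with $X^{*}A^{-1}X \geq 0$ and determinant monotonicity on the positive cone yields the inequality, and your eigenvalue analysis of $I - B^{-1/2}(X^{*}A^{-1}X)B^{-1/2}$ correctly pins down the equality case $X = 0$. You also rightly read the $(2,1)$ block as $X^{*}$, which the paper's statement glosses over. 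The only cosmetic remark is that the last step can be shortened: $X^{*}A^{-1}X = 0$ gives $\tr\bigl((A^{-1/2}X)^{*}(A^{-1/2}X)\bigr) = 0$ and hence $X = 0$ immediately.
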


The following is a similar result to Fischer's Inequality for partial matrices.
\begin{proposition} \label{prop:maxdet}
Let $G_1 = (V_1,E_1)$ and $G_2 = (V_2,E_2)$ be disjoint completable graphs with $V_1\cap V_2 =\emptyset $. Let $G = (V,E)$ with $V = V_1 \cup V_2$ and $E = E_1 \cup E_2$.
Let
\begin{equation*}
H(G)=
\begin{bmatrix}
A(G_1) & X \\
X & B(G_2)
\end{bmatrix},
\end{equation*}
where all entries of $X$ are missing.
If $A(G_1)>0$ and $B(G_2)>0$,
then
$$\det{H}\leq (\det{\widehat{A}})(\det{\widehat{B}}) \quad \text{for all }H\in \mathfrak{p}[A(G)] $$
with equality if and only if $X=\mathbf{0}$.
Here $\widehat{A}$ and $\widehat{B}$ are the maximum determinant positive definite completions of $A(G_1)$ and $B(G_2)$, respectively.
\end{proposition}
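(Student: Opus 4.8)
The plan is to bound $\det H$ by chaining two inequalities: Fischer's inequality applied to the full completion $H$, and the maximality of $\widehat{A}$ and $\widehat{B}$ applied to the diagonal blocks of $H$. First I would record the structural fact that, because $V_1 \cap V_2 = \emptyset$ and $E = E_1 \cup E_2$, the graph $G$ has no edge joining $V_1$ to $V_2$; hence the entire off-diagonal block of $H(G)$ is unspecified, and a completion $H \in \mathfrak{p}[H(G)]$ is precisely a matrix
\begin{equation*}
H = \begin{bmatrix} A & X \\ X^{*} & B \end{bmatrix}
\end{equation*}
in which $A$ is a completion of $A(G_1)$, $B$ is a completion of $B(G_2)$, and $X$ is an arbitrary block making $H$ positive semi-definite. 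Since every principal submatrix of a positive semi-definite matrix is positive semi-definite, and $A$, $B$ agree with $A(G_1)$, $B(G_2)$ on the edges of $E_1$, $E_2$, we have $A \in \mathfrak{p}[A(G_1)]$ and $B \in \mathfrak{p}[B(G_2)]$.

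Next I would establish the two component bounds. For the first, apply Theorem \ref{thm:Fischer} to $H + \epsilon I$ (whose diagonal blocks are $A + \epsilon I$ and $B + \epsilon I$) and let $\epsilon \to 0^{+}$; by continuity of the determinant this yields $\det H \le (\det A)(\det B)$ for every positive semi-definite $H$. For the second, I would upgrade Theorem \ref{theorem:maxdet} from positive definite completions to all positive semi-definite completions: by Theorem \ref{T:property} the set $\mathfrak{p}[A(G_1)]$ is compact, so $\det$ attains its maximum there; since $\widehat{A} \in \mathfrak{p}^{+}[A(G_1)] \subset \mathfrak{p}[A(G_1)]$ has $\det \widehat{A} > 0$ while any singular completion contributes determinant $0$, the maximum is attained at a positive definite completion, which by the uniqueness in Theorem \ref{theorem:maxdet} is $\widehat{A}$. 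Hence $\det A \le \det \widehat{A}$, and likewise $\det B \le \det \widehat{B}$. Chaining gives
\begin{equation*}
\det H \le (\det A)(\det B) \le (\det \widehat{A})(\det \widehat{B})
\end{equation*}
for all $H \in \mathfrak{p}[H(G)]$.

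For the equality case, I would exploit that the bound $(\det \widehat{A})(\det \widehat{B})$ is strictly positive. If $\det H = (\det \widehat{A})(\det \widehat{B})$, then $\det H > 0$, so $H$ is in fact positive definite and the genuine Fischer inequality of Theorem \ref{thm:Fischer} applies with its equality clause: $\det H = (\det A)(\det B)$ forces $X = \mathbf{0}$. Moreover, equality throughout the chain also forces $\det A = \det \widehat{A}$ and $\det B = \det \widehat{B}$, whence $A = \widehat{A}$ and $B = \widehat{B}$ by uniqueness; thus the unique maximizer is the block-diagonal completion
\begin{equation*}
\widehat{H} = \begin{bmatrix} \widehat{A} & 0 \\ 0 & \widehat{B} \end{bmatrix},
\end{equation*}
which belongs to $\mathfrak{p}^{+}[H(G)]$ and attains the bound. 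As a check, $\widehat{H}^{-1}$ is block diagonal with blocks $\widehat{A}^{-1}, \widehat{B}^{-1}$, so it vanishes at every cross position and, by Theorem \ref{theorem:maxdet}, at every missing position inside $V_1$ and $V_2$; this re-identifies $\widehat{H}$ as the maximum-determinant completion of $H(G)$.

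The main obstacle I anticipate is the interface between the positive definite and positive semi-definite regimes. Theorem \ref{thm:Fischer} and Theorem \ref{theorem:maxdet} are both stated for positive definite matrices, whereas $\mathfrak{p}[H(G)]$ and $\mathfrak{p}[A(G_1)]$ contain singular boundary points; the two resolutions above---an $\epsilon I$-perturbation with continuity for the inequality, and a compactness-with-positive-maximum argument for the maximality bound---are exactly the steps that need care. The equality analysis is then painless precisely because the target value is positive, which confines the equality case to the positive definite interior where the sharp form of Fischer's inequality is available.
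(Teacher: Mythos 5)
Your proof is correct and follows essentially the same route as the paper's: chain Fischer's inequality $\det H \le (\det A)(\det B)$ with the maximality $\det A \le \det\widehat{A}$, $\det B \le \det\widehat{B}$ from Theorem \ref{theorem:maxdet}, then use the equality clause of Theorem \ref{thm:Fischer} to force $X=\mathbf{0}$. The only difference is that you explicitly patch the positive semi-definite boundary (the $\epsilon I$ limit for Fischer and the compactness argument for the maximality bound), which the paper's proof glosses over; this is added care rather than a different method.
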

\begin{proof}
Since $G_1$ and $G_2$ are chordal, so is $G$.
By Theorem \ref{chordal},
the graph $G$ is completable. Since $A(G_1)>0$ and $B(G_2)>0$ , it is clear that $H(G)>0$ , implying $\mathfrak{p}^+[H(G)] \neq \emptyset$.
By Theorem \ref{theorem:maxdet}, the graphs $A(G_1)$, $B(G_2)$ have the maximum determinant positive definite completions, say $\widehat{A}$, $\widehat{B}$ respectively.
By Theorem \ref{thm:Fischer}, it follows that
$\det(H) \leq (\det{A})(\det{B})\leq (\det{\widehat{A}})(\det{\widehat{B}}) $
for all $H\in \mathfrak{p}[H(G)]$, $A\in \mathfrak{p}[A(G_1)]$, and $B\in \mathfrak{p}[B(G_2)]$. When $X=\mathbf{0}$, the equality holds. If $\det(H) = (\det{\widehat{A}})(\det{\widehat{B}}) $, then $X=\mathbf{0}$ by Theorem \ref{thm:Fischer}.
\end{proof}

\begin{lemma} \label{L:joint concave}
The map $f: \mathbb{P} \times \mathbb{P} \to \mathbb{P}$ defined by $f(A, B) = \log \det (A \#_{t} B)$ for any $t \in [0,1]$ is strictly jointly concave.
\end{lemma}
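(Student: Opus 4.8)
The plan is to reduce the strict joint concavity of $f(A,B) = \log\det(A \#_t B)$ to the already-established strict concavity of $\log\det$ on $\mathbb{P}$ (Lemma \ref{lemma:concave}), combined with the joint concavity of the geometric mean itself (Theorem \ref{T:geomean} (7)) and the monotonicity of both $\log\det$ and the geometric mean (Theorem \ref{T:geomean} (4)). The key identity making this work is item (9) of Theorem \ref{T:geomean}, which gives $\log\det(A \#_t B) = (1-t)\log\det A + t\log\det B$, turning the problem into one about sums of scalar log-determinants.

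First I would fix $A_0, A_1, B_0, B_1 \in \mathbb{P}$ and $\lambda \in [0,1]$, and set $A_\lambda = (1-\lambda)A_0 + \lambda A_1$ and $B_\lambda = (1-\lambda)B_0 + \lambda B_1$. The goal is the inequality
\begin{equation*}
f(A_\lambda, B_\lambda) \geq (1-\lambda) f(A_0, B_0) + \lambda f(A_1, B_1),
\end{equation*}
with strict inequality unless $A_0 = A_1$ and $B_0 = B_1$. I would proceed in two steps. By the joint concavity of the geometric mean (Theorem \ref{T:geomean} (7)),
\begin{equation*}
A_\lambda \#_t B_\lambda \geq (1-\lambda)(A_0 \#_t B_0) + \lambda (A_1 \#_t B_1),
\end{equation*}
and since $\log\det$ is operator monotone on $\mathbb{P}$ (a consequence of Theorem \ref{T:geomean} (4) applied in the commuting case, or directly from monotonicity of the determinant under the Loewner order), this yields
\begin{equation*}
f(A_\lambda, B_\lambda) \geq \log\det\bigl[(1-\lambda)(A_0 \#_t B_0) + \lambda (A_1 \#_t B_1)\bigr].
\end{equation*}
Then the strict concavity of $\log\det$ (Lemma \ref{lemma:concave}) bounds the right-hand side below by $(1-\lambda)\log\det(A_0 \#_t B_0) + \lambda \log\det(A_1 \#_t B_1) = (1-\lambda)f(A_0,B_0) + \lambda f(A_1,B_1)$, which is exactly what is wanted.

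The main obstacle is establishing \emph{strictness}, since the first inequality (from joint concavity of the mean) can be an equality while the second (from strict concavity of $\log\det$) degenerates only when $A_0 \#_t B_0 = A_1 \#_t B_1$. The cleaner route, which I would ultimately adopt to avoid tracking equality cases through two chained inequalities, is to bypass the first step entirely: by Theorem \ref{T:geomean} (9), $f(A,B) = (1-t)\log\det A + t\log\det B$, so $f$ is literally a nonnegative linear combination of the two coordinate maps $(A,B)\mapsto \log\det A$ and $(A,B) \mapsto \log\det B$, each strictly concave in its own variable by Lemma \ref{lemma:concave}. One then verifies that a sum $g(A) + h(B)$ of functions strictly concave in separate variables is strictly jointly concave: equality in the concavity inequality for the sum forces equality in each summand separately, hence $A_0 = A_1$ and $B_0 = B_1$ by strict concavity of $\log\det$. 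This makes the strictness transparent, and I would present the argument in this form, noting that the geometric-mean monotonicity route reproves the weak (non-strict) joint concavity as a sanity check.
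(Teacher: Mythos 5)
Your final argument---writing $f(A,B) = (1-t)\log\det A + t\log\det B$ via $\det(A\#_t B)=(\det A)^{1-t}(\det B)^{t}$ and invoking strict concavity of $\log\det$---is exactly the proof given in the paper (which cites Theorem \ref{T:geomean} (1) where item (9) is evidently meant). The alternative route you sketch first, chaining the joint concavity of the geometric mean with monotonicity of $\log\det$, appears almost verbatim as the remark immediately following the lemma, so your proposal matches the paper on both counts.
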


\begin{proof}
By Theorem \ref{T:geomean} (1), the function $f$ on $\mathbb{P} \times \mathbb{P}$ can be written as
\begin{displaymath}
f(A, B) = \log \det (A \#_{t} B) = (1-t) \log \det A + t \log \det B.
\end{displaymath}
Since the map $\log \det: \mathbb{P} \to \mathbb{R}$ is strictly concave, so is $f$.
\end{proof}

\begin{remark}
We can show the joint concavity of the map $f$ in Lemma \ref{L:joint concave} by using the joint concavity of geometric mean. Indeed, by Theorem \ref{T:geomean} (7),
\begin{equation*}
[(1 - \lambda) A_{1} + \lambda A_{2}] \#_{t} [(1 - \lambda) B_{1} + \lambda B_{2}] \geq (1 - \lambda) (A_{1} \#_{t} B_{1}) + \lambda (A_{2} \#_{t} B_{2})
\end{equation*}
for any $A_{1}, A_{2}, B_{1}, B_{2} \in \mathbb{P}$ and any $\lambda \in [0,1]$. Since $0< A \leq B $ implies $0<\det A \leq \det B$ from \cite[Corollary 7.7.4]{HJ}, the logarithmic map $\log: (0, \infty) \to \mathbb{R}$ is monotone increasing, and the map $\log \det: \mathbb{P} \to \mathbb{R}$ is strictly concave,
\begin{equation*}
\begin{split}
f((1 - \lambda) (A_{1}, B_{1}) + \lambda (A_{2}, B_{2})) & = \log \det [(1 - \lambda) A_{1} + \lambda A_{2}] \#_{t} [(1 - \lambda) B_{1} + \lambda B_{2}] \\
& \geq \log \det [ (1 - \lambda) (A_{1} \#_{t} B_{1}) + \lambda (A_{2} \#_{t} B_{2}) ] \\
& \geq (1 - \lambda) f(A_{1}, B_{1}) + \lambda f(A_{2}, B_{2}).
\end{split}
\end{equation*}
\end{remark}

\begin{theorem} \label{T:maxAG}
Let $G$ and $F$ be a given completable graph, and let $A(G)$ and $B(F)$ be partial matrices with $A(G) > 0$ and $B(F) > 0$. For $t \in [0,1]$ there exists a unique positive definite completion
$H$ of $A(G) \#_{t} B(F)$ such that
\begin{equation*}
\det(H)=
\max{\{ \det(M \#_{t} N): M \#_{t} N \in A(G)\#_{t}B(F) \}}.
\end{equation*}
Furthermore,
$H = \widehat{A} \#_{t} \widehat{B},
$
where $\widehat{A}$ and $\widehat{B}$ are the maximum determinant positive definite completions of $A(G)$ and $B(F)$, respectively.
\end{theorem}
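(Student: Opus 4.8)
The plan is to reduce the maximization over the two-parameter family $\{ M \#_{t} N \}$ to two independent scalar maximizations by exploiting the multiplicative behavior of the determinant under the geometric mean. The crucial tool is Theorem \ref{T:geomean} (9), which gives $\det(M \#_{t} N) = (\det M)^{1-t} (\det N)^{t}$ for every $M \in \mathfrak{p}^{+}[A(G)]$ and $N \in \mathfrak{p}^{+}[B(F)]$; this identity decouples the objective into a product of a factor depending only on $M$ and a factor depending only on $N$.

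First I would invoke Theorem \ref{theorem:maxdet} twice. Since $A(G) > 0$ and $B(F) > 0$ on completable graphs, there exist unique maximum-determinant completions $\widehat{A}$ and $\widehat{B}$, characterized by $\det M \leq \det \widehat{A}$ for all $M \in \mathfrak{p}^{+}[A(G)]$ with equality exactly at $M = \widehat{A}$, and the analogous statement for $\widehat{B}$. Because the exponents $1-t$ and $t$ are nonnegative and all determinants are strictly positive, I would combine these two bounds multiplicatively to obtain
\[
\det(M \#_{t} N) = (\det M)^{1-t}(\det N)^{t} \leq (\det \widehat{A})^{1-t}(\det \widehat{B})^{t} = \det(\widehat{A} \#_{t} \widehat{B}),
\]
which simultaneously evaluates the maximum and exhibits $H = \widehat{A} \#_{t} \widehat{B}$ as a maximizer.

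For uniqueness of the maximizing matrix $H$, I would argue that for $t \in (0,1)$ both exponents are strictly positive, so equality in the displayed chain forces $\det M = \det \widehat{A}$ and $\det N = \det \widehat{B}$ at once; the uniqueness clause of Theorem \ref{theorem:maxdet} then pins down $M = \widehat{A}$ and $N = \widehat{B}$, whence $M \#_{t} N = \widehat{A} \#_{t} \widehat{B}$. The endpoints $t = 0$ and $t = 1$ must be treated separately, since there $M \#_{t} N$ collapses to $M$ or $N$ and the determinant no longer depends on one of the factors; but then the maximizing matrix is simply $\widehat{A}$ (resp. $\widehat{B}$), which still equals $\widehat{A} \#_{t} \widehat{B}$.

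The step I expect to require the most care is existence, precisely because the set $A(G) \#_{t} B(F) = \mathfrak{p}^{+}[A(G)] \#_{t} \mathfrak{p}^{+}[B(F)]$ need not be compact: $\mathfrak{p}^{+}[A(G)]$ is bounded and convex but open rather than closed (Remark \ref{R:property}), so a maximizer cannot be produced by a compactness argument and must instead come directly from the already-guaranteed existence of $\widehat{A}$ and $\widehat{B}$ through the factorization. A related subtlety is that different pairs $(M,N)$ might yield the same matrix $M \#_{t} N$, so uniqueness is asserted at the level of the matrix $H$ rather than the pair; the strict determinant inequality above resolves this, since for $t \in (0,1)$ only $(\widehat{A}, \widehat{B})$ attains the bound.
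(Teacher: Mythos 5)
Your proposal is correct and follows essentially the same route as the paper's proof: decouple $\det(M\#_t N)=(\det M)^{1-t}(\det N)^t$ via Theorem \ref{T:geomean} (9), bound each factor by Theorem \ref{theorem:maxdet}, and derive uniqueness from the fact that $x^{1-t}y^t=1$ with $x,y\le 1$ and $t\in(0,1)$ forces $x=y=1$. Your explicit handling of the endpoints $t=0,1$ is a small point the paper's proof leaves implicit, but otherwise the arguments coincide.
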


\begin{proof}
Since $\det(M) \leq \det(\widehat{A})$ and $\det(N) \leq \det(\widehat{B})$ for all $M \in \mathfrak{p}^{+}[A(G)]$ and $N \in \mathfrak{p}^{+}[B(F)]$, it is clear that $\det(M \#_t N) = (\det(M))^{1-t} (\det(N))^t \leq (\det(\widehat{A}))^{1-t} (\det(\widehat{B}))^t = \det(\widehat{A} \# \widehat{B})$. We just show the uniqueness.
Suppose that there exists $C \in \mathfrak{p}^{+}[A(G)]$ and $D \in \mathfrak{p}^{+}[B(F)]$ such that
$\det{(C \#_{t} D)} = \det(\widehat{A} \#_{t} \widehat{B})$.
Then,
$(\det(C))^{1-t} (\det(D))^{t} = (\det(\widehat{A}))^{1-t} (\det(\widehat{B}))^{t}$.
Setting
$x=\det(C)/\det(\widehat{A})$, $y=\det(D)/\det(\widehat{B})$, we have $x^{1-t} y^t = 1$, implying that $(t-1)\log{x} = t\log{y}$. Since $x\leq 1$ and $y\leq 1$, it must be $x=y=1$ for some $0<t<1$. Therefore, $\det(\widehat{A})=\det(C)$ and $\det(\widehat{B})=\det(D)$. By the uniqueness of $\widehat{A}$ and $\widehat{B}$, it holds that $\widehat{A}=C$ and $\widehat{B}=D$.
\end{proof}

In other words, the positive definite completion of $A(G) \#_{t} B(F)$ can uniquely be expressed as positive definite completions of $A(G)$ and $B(F)$ with respect to maximum determinant.

\begin{corollary}
Let $G_1=(V_1,E_1)$ and $G_2=(V_2,E_2)$ be disjoint completable graphs with $V_1\cap V_2 = \emptyset$.
Let $G = (V,E)$ with $V = V_1 \cup V_2$ and $E = E_1 \cup E_2$.
Let
\begin{equation*}
A(G) =
\begin{bmatrix}
A_{1}(G_{1}) & X \\
X & A_{2}(G_{2})
\end{bmatrix}
\text{ and }
B(G) =
\begin{bmatrix}
B_{1}(G_{1}) & Y \\
Y & B_{2}(G_{2})
\end{bmatrix},
\end{equation*}
where all entries of $X$ and $Y$ are missing.
If $A_{i}(G_{i})>0$ and $B_{i}(G_{i})>0$ for $i = 1, 2$,
then
$$\det{(A\#_{t} B)}\leq \det{(A_1\#_{t} B_1)} \det{(A_2\#_{t} B_2)}
\quad \text{for all }A\#_t B\in A(G)\#_{t} B(G) $$
with equality if and only if $X=Y=\mathbf{0}$.
\end{corollary}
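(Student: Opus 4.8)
The plan is to reduce everything to the ordinary Fischer inequality applied separately to the two completions, combined with the multiplicativity of the determinant under the geometric mean in Theorem~\ref{T:geomean}~(9). First I would fix an arbitrary element $A \#_{t} B \in A(G) \#_{t} B(G)$, so that $A \in \mathfrak{p}^{+}[A(G)]$ and $B \in \mathfrak{p}^{+}[B(G)]$, and partition them conformally with the block pattern of $A(G)$ and $B(G)$:
\begin{equation*}
A = \begin{bmatrix} A_{1} & X' \\ (X')^{*} & A_{2} \end{bmatrix}, \qquad B = \begin{bmatrix} B_{1} & Y' \\ (Y')^{*} & B_{2} \end{bmatrix}.
\end{equation*}
Since $A, B > 0$, their principal blocks $A_{1}, A_{2}, B_{1}, B_{2}$ are positive definite, and because they agree with the given data on $E_{1}$ and $E_{2}$ we have $A_{i} \in \mathfrak{p}^{+}[A_{i}(G_{i})]$ and $B_{i} \in \mathfrak{p}^{+}[B_{i}(G_{i})]$ for $i = 1, 2$; here $X'$ and $Y'$ are precisely the blocks filling the missing entries $X$ and $Y$.

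Next I would compute directly. By Theorem~\ref{T:geomean}~(9),
\begin{equation*}
\det(A \#_{t} B) = (\det A)^{1-t} (\det B)^{t}.
\end{equation*}
Applying Fischer's inequality (Theorem~\ref{thm:Fischer}) to the positive definite matrices $A$ and $B$ gives $\det A \le (\det A_{1})(\det A_{2})$ and $\det B \le (\det B_{1})(\det B_{2})$. Substituting and regrouping the exponents,
\begin{equation*}
\det(A \#_{t} B) \le \big[(\det A_{1})(\det A_{2})\big]^{1-t}\big[(\det B_{1})(\det B_{2})\big]^{t} = (\det A_{1})^{1-t}(\det B_{1})^{t}\,(\det A_{2})^{1-t}(\det B_{2})^{t},
\end{equation*}
and a second application of Theorem~\ref{T:geomean}~(9) to each of the two factors rewrites the right-hand side as $\det(A_{1} \#_{t} B_{1})\,\det(A_{2} \#_{t} B_{2})$, which is the asserted bound.

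For the equality case I would argue via strict monotonicity. For $t \in (0,1)$ the map $(x,y) \mapsto x^{1-t} y^{t}$ is strictly increasing in each of $x, y > 0$, so equality forces both $\det A = (\det A_{1})(\det A_{2})$ and $\det B = (\det B_{1})(\det B_{2})$ simultaneously. By the equality clause of Fischer's inequality this holds exactly when the off-diagonal blocks vanish, i.e. $X' = \mathbf{0}$ and $Y' = \mathbf{0}$; as $X'$ and $Y'$ are the completions of the missing blocks, this is the statement $X = Y = \mathbf{0}$. The converse is immediate, since filling both missing blocks with zeros makes both Fischer inequalities tight. The only point needing care, and what I expect to be the sole subtlety rather than any hard inequality, is the endpoint bookkeeping: at $t = 0$ (resp. $t = 1$) only $\det A$ (resp. $\det B$) enters the left-hand side, so only $X = \mathbf{0}$ (resp. $Y = \mathbf{0}$) is forced, and the symmetric condition $X = Y = \mathbf{0}$ is to be read for $0 < t < 1$. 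Everything else is a routine composition of Theorem~\ref{T:geomean}~(9) with the scalar Fischer bound.
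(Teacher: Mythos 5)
Your proof is correct and follows essentially the same route as the paper, which simply invokes its Fischer-type inequality (Proposition \ref{prop:maxdet}) together with the determinant identity $\det(A\#_t B)=(\det A)^{1-t}(\det B)^t$ from Theorem \ref{T:geomean}~(9); you just unwind that citation by applying Fischer's inequality directly to the two conformally partitioned completions. Your remark about the endpoints $t=0,1$ in the equality case is a legitimate point of care that the paper's one-line proof glosses over.
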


\begin{proof}
By Proposition \ref{prop:maxdet} and Theorem \ref{T:geomean} (9) it is trivial.
\end{proof}

\begin{theorem} \label{T:order_property}
Let $G$ be a given completable graph.
If $0 < A(G) \leq B(G)$, then $0 < \det(\widehat{A}) \leq \det(\widehat{B})$.
\end{theorem}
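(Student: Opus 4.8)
The plan is to exploit the monotonicity of the determinant under the Loewner order, bridging the two maximum determinant completions by a single explicitly constructed completion of $B(G)$. First I would check that $\widehat{B}$ is even defined: since $A(G) > 0$ and $A(G) \leq B(G)$ means $B(G) - A(G) \in \overline{\mathbb{P}}(G)$, on every clique $C$ of $G$ the principal submatrix of $B(G)$ is the sum of the corresponding principal submatrices of $A(G)$ and of $B(G) - A(G)$, i.e. a positive definite matrix plus a positive semi-definite one, hence positive definite. Thus $B(G) \in \mathbb{P}(G)$, so by Theorem \ref{theorem:maxdet} the maximum determinant completion $\widehat{B}$ exists. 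Since $\widehat{A} > 0$ gives $\det(\widehat{A}) > 0$ immediately, only the inequality $\det(\widehat{A}) \leq \det(\widehat{B})$ needs work.

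The central construction is to produce a positive definite completion of $B(G)$ that dominates $\widehat{A}$ in the ordinary Loewner order. Because $G$ is completable and $B(G) - A(G)$ is partial positive semi-definite, it admits a positive semi-definite completion $D \geq 0$. I would then set $M := \widehat{A} + D$. For each specified entry $\{i,j\} \in E$ one computes $M_{ij} = \widehat{A}_{ij} + D_{ij} = a_{ij} + (b_{ij} - a_{ij}) = b_{ij}$, so $M$ agrees with $B(G)$ on all of $E$; and $M$ is positive definite as the sum of the positive definite $\widehat{A}$ and the positive semi-definite $D$. Hence $M \in \mathfrak{p}^{+}[B(G)]$, and since $D \geq 0$ we also have $\widehat{A} \leq M$.

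The proof then closes by chaining two monotonicity facts. From $0 < \widehat{A} \leq M$ and \cite[Corollary 7.7.4]{HJ} we obtain $\det(\widehat{A}) \leq \det(M)$, and from the maximality that characterizes $\widehat{B}$ together with $M \in \mathfrak{p}^{+}[B(G)]$ we obtain $\det(M) \leq \det(\widehat{B})$. Combining gives $0 < \det(\widehat{A}) \leq \det(\widehat{B})$, as required.

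The hard part will be the construction of $M$ rather than the final estimates: one must verify that completability genuinely supplies a positive semi-definite completion $D$ of the partial matrix $B(G) - A(G)$, and that adding $D$ to $\widehat{A}$ simultaneously restores the specified entries of $B(G)$ and preserves positive definiteness. Once $M$ is in hand, the two determinant inequalities are entirely routine.
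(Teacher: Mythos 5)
Your proof is correct and follows essentially the same route as the paper: both arguments complete the partial difference $B(G)-A(G)$ to a positive (semi-)definite matrix, add it to $\widehat{A}$ to obtain a positive definite completion of $B(G)$, and then compare determinants against the maximality of $\widehat{B}$. The only difference is in the closing estimate --- the paper invokes superadditivity $\det(\widehat{M})+\det(\widehat{A})\leq\det(\widehat{M}+\widehat{A})$ while you use Loewner monotonicity of the determinant --- and your version has the minor advantage of working directly under the non-strict hypothesis $A(G)\leq B(G)$, whereas the paper's proof tacitly assumes $B(G)-A(G)>0$.
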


\begin{proof}
Since the graph $G=(V,E)$ is completable and $B(G)-A(G)>0$, the partial matrix $B(G)-A(G)$ has the maximum determinant positive definite completion, say $\widehat{M}$.
Let $\widehat{A}$ be the maximum determinant positive definite completion of $A(G)$. Let $b_{ij}$ be entries of $B(G)$ for $\{i,j\}\in E$.
Since $\widehat{M}_{ij}+\widehat{A}_{ij}=b_{ij}$ for all $\{i,j\}\in E$,
$\widehat{M}+\widehat{A}$ is a positive definition completion of $B(G)$. Then it follows that
\begin{equation*}
0< \det{(\widehat{A})} <
\det{(\widehat{M})}+\det{(\widehat{A})} \leq
\det{(\widehat{M}+\widehat{A})} \leq
\det{(\widehat{B})}
\end{equation*}
Note that $\det{(A)}+\det{(B)} \leq \det{(A+B)}$ for $A,B\in \mathbb{P}$ (see p.511, \cite{HJ}).
\end{proof}

\section{Computational results}\label{sec:computation}

Consider finding the maximum determinant positive definite completion among $A(G)\#B(G)$ when $A(G)$ and $B(G)$ are $n\times n$ $G$-partial positive definite matrices with only one missing entry in the $(1,n)$ position, respectively.

\begin{theorem}[\cite{MR1059486}]\label{theorem:maxdetone}
Consider the following partial matrix with the only one missing entry:
\begin{equation}\label{block_1missing}
    H(x)=
    \begin{bmatrix}
    a & v^T & x\\
    v & C & w\\
    x & w^T & b
    \end{bmatrix},
\end{equation}
where all entries are given except $x$.
If $H(x)$ is partial positive definite, $H(x)$ has a positive definite completion. Indeed, the set of all such completions is given by the inequality
\begin{equation*}
    |x-v^TC^{-1}w|^2 < \dfrac{\det{A}\det{B}}{(\det{C})^2}.
\end{equation*}
Two endpoints of this interval give singular positive semidefinite completions of $H(x)$.
When $x=v^T C^{-1}w $,
the positive definite completion has the maximum determinant
\begin{equation*}
    \frac{\det{(A)}\det{(B)}}{(\det{C})^2},
\end{equation*}
where
\begin{equation*}
A=
    \begin{bmatrix}
  a & v^T\\
  v & C
    \end{bmatrix}
    \quad\text{and}\quad
    B=
    \begin{bmatrix}
  C & w\\
  w^T & b
    \end{bmatrix}.
\end{equation*}
\end{theorem}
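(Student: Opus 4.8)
The plan is to reduce the whole statement to the Schur complement of the fully specified central block $C$. Since the only missing entry is $x$, each of $C$, $A$, and $B$ is a fully specified principal submatrix whose index set is a clique, so partial positive definiteness forces $C>0$, $A>0$, and $B>0$; in particular $C^{-1}$ exists. A permutation similarity that moves $C$ to the leading position carries $H(x)$ to
\[
 \tilde H(x)=\begin{bmatrix} C & v & w \\ v^{*} & a & x \\ w^{*} & \bar x & b \end{bmatrix},
\]
and both positive definiteness and the value of the determinant are preserved under this congruence, so it suffices to analyze $\tilde H(x)$.

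Next I would apply the standard Schur complement criterion: since $C>0$, one has $\tilde H(x)>0$ if and only if the $2\times 2$ matrix
\[
 S(x)=\begin{bmatrix} a & x \\ \bar x & b \end{bmatrix}-\begin{bmatrix} v^{*}\\ w^{*}\end{bmatrix}C^{-1}\begin{bmatrix} v & w\end{bmatrix}=\begin{bmatrix}\alpha & \gamma\\ \bar\gamma & \beta\end{bmatrix}
\]
is positive definite, where $\alpha=a-v^{*}C^{-1}v$, $\beta=b-w^{*}C^{-1}w$, and $\gamma=x-v^{*}C^{-1}w$. Applying the Schur determinant identity to $A$ and to $B$ yields $\alpha=\det A/\det C$ and $\beta=\det B/\det C$, both forced positive by $A,B,C>0$. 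Consequently the leading-entry condition $\alpha>0$ for $S(x)>0$ is automatic, and the single remaining constraint is $\det S(x)=\alpha\beta-|\gamma|^{2}>0$, i.e.
\[
 |x-v^{*}C^{-1}w|^{2}<\alpha\beta=\frac{\det A\,\det B}{(\det C)^{2}},
\]
which is precisely the claimed interval; it is nonempty (it contains $x=v^{*}C^{-1}w$), proving that a positive definite completion exists. At an endpoint, $|\gamma|^{2}=\alpha\beta$ forces $\det S(x)=0$ while $\alpha>0$ keeps $S(x)\ge 0$, so $\tilde H(x)\ge 0$ is singular, giving the two boundary positive semidefinite completions.

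Finally, the determinant factors as $\det H(x)=\det\tilde H(x)=\det C\cdot\det S(x)=\det C\,\bigl(\alpha\beta-|\gamma|^{2}\bigr)$. Since $\det C$, $\alpha$, $\beta$ are fixed and $|\gamma|^{2}=|x-v^{*}C^{-1}w|^{2}$ vanishes only at $\gamma=0$, this is uniquely maximized at $x=v^{*}C^{-1}w$, where the Schur complement determinant attains its maximal value $\alpha\beta=\det A\,\det B/(\det C)^{2}$ (so $\det H$ there equals $\det C$ times this quantity). The step I would handle most carefully is the Hermitian bookkeeping of $S(x)$ — the conjugate in the $(2,1)$ entry and the appearance of $|\gamma|^{2}$ rather than $\gamma^{2}$ — together with a clean invocation of the Schur determinant identity to rewrite $\alpha$ and $\beta$ as the determinant ratios; everything else is routine congruence invariance and the monotonicity of $\alpha\beta-|\gamma|^{2}$ in $|\gamma|^{2}$.
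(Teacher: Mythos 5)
The paper does not prove this theorem; it is imported verbatim from the cited reference, so there is no internal proof to compare against. Your Schur-complement argument is correct and complete: permutation similarity to put $C$ in the leading block, the identification $\alpha=\det A/\det C$, $\beta=\det B/\det C$ via the Schur determinant identity, the automatic positivity of $\alpha$ and $\beta$ from partial positive definiteness of the cliques, and the reduction of the completion condition to $|\gamma|^{2}<\alpha\beta$ are all standard and correctly executed, including the Hermitian bookkeeping.

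Your derivation in fact exposes an error in the statement as printed. You correctly obtain
\begin{equation*}
\det H(x)=\det C\cdot\det S(x)=\det C\left(\frac{\det A\,\det B}{(\det C)^{2}}-|x-v^{*}C^{-1}w|^{2}\right),
\end{equation*}
so the maximum determinant, attained at $x=v^{*}C^{-1}w$, is $\det A\,\det B/\det C$, not $\det A\,\det B/(\det C)^{2}$; the latter is the squared radius of the admissible interval (equivalently, the maximal determinant of the $2\times 2$ Schur complement $S(x)$), and the two coincide only when $\det C=1$. A one-line check: with $a=b=1$, $C=[2]$, $v=w=0$, the maximizer is $x=0$ and $\det H=2=\det A\,\det B/\det C$, whereas $\det A\,\det B/(\det C)^{2}=1$. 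This is also consistent with the clique/separator determinant formula for chordal completions. Your parenthetical remark that $\det H$ equals $\det C$ times the displayed quantity is exactly the needed correction; state it as such rather than leaving it as an aside.
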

Clearly it holds that $H(x)>0$ if and only if $A>0$ and $B>0$.
In \cite{GHJT99} a robust and fast algorithm based on the preceding theorem is introduced. Suppose that a partial matrix with one or possibly more then one missing entries is given.
We fix all but one entry and then place the position in the $(1,n)$ spot via permutation similarity. Then by Theorem \ref{theorem:maxdetone} the maximum determinant completion can be found. Repeating this process, the sequence of completion matrices is constructed with respect to each of missing entries. It is shown that the sequence
converges to the unique global maximum determinant completion (for more information, see \cite[Theorem 2]{GHJT99}).


Now using Theorem \ref{theorem:maxdetone} behaviors of $A(G)\#_{t}B(F)$ are shown computationally. All graphs in Figure 1--4 are generated by a MATLAB program \cite{MATLAB2015b}.
\begin{example}
Consider the following partial matrices.
\begin{equation*}
A(x)=
    \begin{bmatrix}
    3 & -1 & x\\
    -1 & 3 & 2\\
    x & 2 & 4
    \end{bmatrix},
\quad
B(y)=
    \begin{bmatrix}
    4 & 3 & y\\
    3 & 5 & -1\\
    y & -1 & 2
    \end{bmatrix},
\end{equation*}
where $x$ and $y$ are missing entries.
Since $A(x)>0$ and $B(y)>0$, by Theorem \ref{theorem:maxdetone} they have positive definite completions when $-10/3 < x < 2$ and $(-3\sqrt{11}-3)/5< y < (3\sqrt{11}-3)/5$. The determinant and each eigenvalue of $A(x)\#B(y)$ with respect to such values $x$ and $y$ are shown in Figure \ref{fig1}.  Also, it is shown that $A(x)\#B(y)$ have the maximum determinants when $x=-2/3$ and $y=-3/5$, respectively.

\begin{figure}
\centering
\includegraphics[scale=0.6]{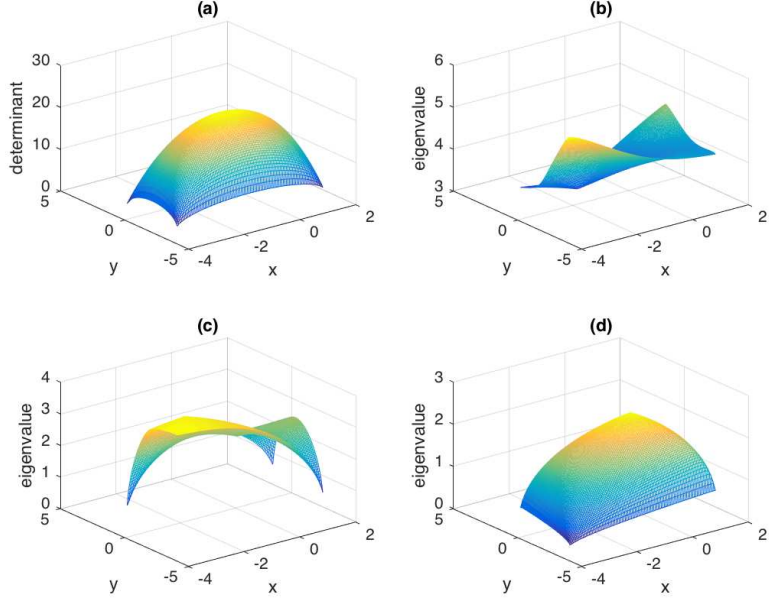}
\caption{(a) the determinant of $A(x)\#B(y)$; (b) the maximum eigenvalue of $A(x)\#B(y)$; (c) the second maximum eigenvalue of $A(x)\#B(y)$; (d) the smallest eigenvlue of $A(x)\#B(y)$.}\label{fig1}
\label{fig:testfig}
\end{figure}
\end{example}

\begin{example}
We consider the geometric mean of same partial matrices.
Let
\begin{equation*}
A(x)=
    \begin{pmatrix}
    3 & -1 & 1 & 1 & x\\
    -1 & 3 & -1 & 1 & 0\\
    1 & -1 & 3 & 2 & 1\\
    1 & 1 & 2 & 4 & 2\\
    x & 0 & 1 & 2 & 4
    \end{pmatrix} \text { and }
B(y)=
    \begin{pmatrix}
    3 & 0 & 1 & 2 & y\\
    0 & 1 & 0 & -1 & 0\\
    1 & 0 & 5 & -1 & 1\\
    2 & -1 & -1 & 3 & 0\\
    y & 0 & 1 & 0 & 4
    \end{pmatrix},
\end{equation*}
where $x$ and $y$ are missing entries.
Since $A(x)>0$ and $B(y)>0$, by Theorem \ref{theorem:maxdetone} they have positive definite completions when $(10-\sqrt{1036})/3 < x < (10+\sqrt{1036})/3$ and $(4-\sqrt{34})/9 < y < (4+\sqrt{34})/9$. The determinant and each eigenvalue of $A(x)\#B(y)$ with respect to such values $x$ and $y$ are shown in Figure \ref{fig2}.  Also, it is shown that $A(x)\#B(y)$ have the maximum determinants when $x=10/13$ and $y=4/9$, respectively.

\end{example}

\begin{figure}
  \centering
    \includegraphics[width=1\textwidth]{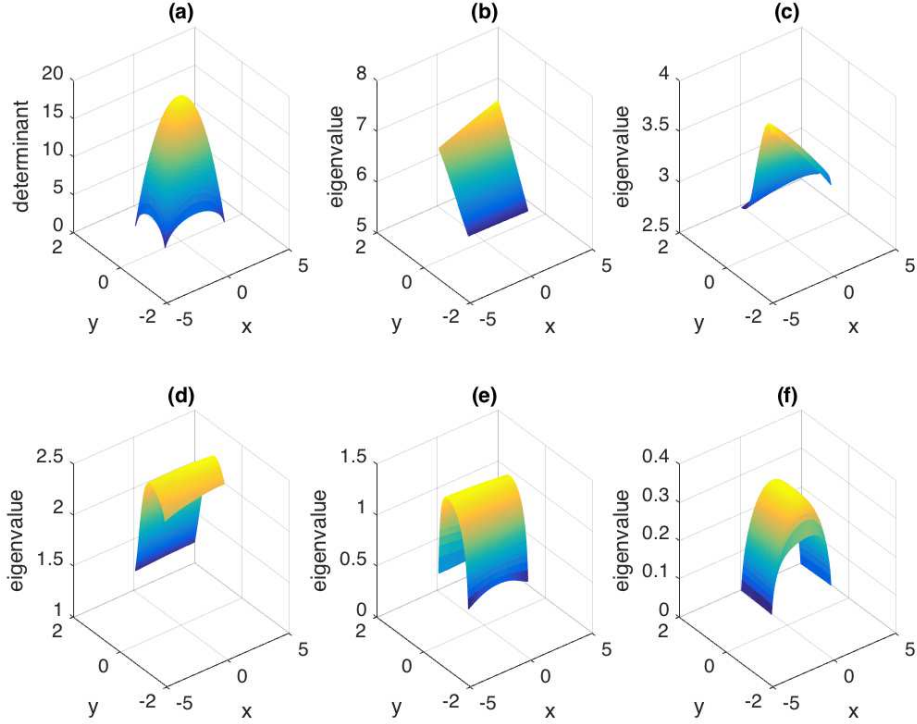}
     \caption{(a) the determinant of $A(x)\#B(y)$; (b) the largest eigenvalue of $A(x)\#B(y)$; (c) the second largest eigenvalue of $A(x)\#B(y)$; (d) the third largest eigenvlue of $A(x)\#B(y)$; (e) the fourth eigenvlue of $A(x)\#B(y)$; (f) the smallest eigenvlue of $A(x)\#B(y)$.}\label{fig2}
\end{figure}

\begin{figure}
     \centering
    \includegraphics[scale=0.6]{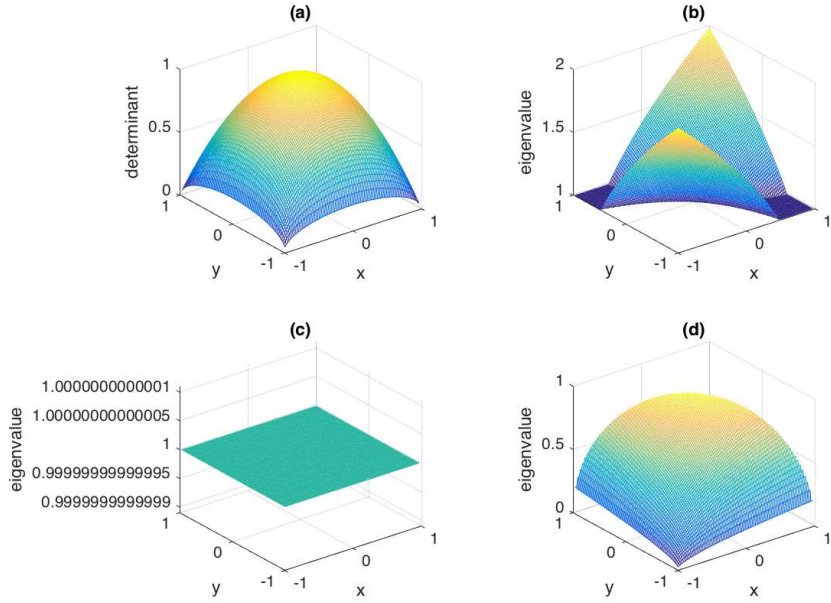}
     \caption{(a) the determinant of $I_{10}(x)\#I_{10}(y)$; (b) the largest eigenvalue of $I_{10}(x)\#I_{10}(y)$; (c) the second largest eigenvalue of $I_{10}(x)\#I_{10}(y)$; (d) the smallest eigenvlue of $I_{10}(x)\#I_{10}(y)$.}\label{fig3}
\end{figure}

\begin{example}
Let
\begin{equation*}
I_n(x)=
    \begin{pmatrix}
    1 & 0 & x\\
    0 & I & 0\\
    x & 0 & 1
    \end{pmatrix},
\end{equation*}
where $I$ is the $(n-2)\times (n-2)$ identity matrix.
By Theorem \ref{theorem:maxdetone}, $I_n(x)\#I_n(y)$ has positive definite completions when $x,y\in(-1, 1)$ and has the maximum determinants when $x=y=0$. The determinant and each eigenvalue of $I(x)\#I(y)$ with respect to such values $x$ and $y$ are shown in Figure \ref{fig3}. Note that each graph looks like symmetric with respect to $y=-x$ and $y=x$ since $I_n(x)\#I_n(y)=I_n(y)\#I_n(x)=I_n(-x)\#I_n(-y)$.
\end{example}

\begin{example}
Let
\begin{equation*}
A(x,y)=
    \begin{pmatrix}
    2 & 1 & x\\
    1 & 2 & y\\
    x & y & 2
    \end{pmatrix},
    \quad
    B=
    \begin{pmatrix}
    4 & 3 & 0\\
    3 & 5 & -1\\
    0 & -1 & 2
    \end{pmatrix},
\end{equation*}
By simple calculations, it can be shown that $A(x,y)\#B$ has positive definite completions if and only if $|x|<2$, $|y|<2$, and $6+3xy-2x^2-2y^2>0$, and has the maximum determinants when $x=y=0$. The determinant and each eigenvalue of $A(x,y)\#B$ with respect to such values $x$ and $y$ are shown in Figure \ref{fig4}.
\begin{figure}
  \centering
    \includegraphics[scale=0.5]{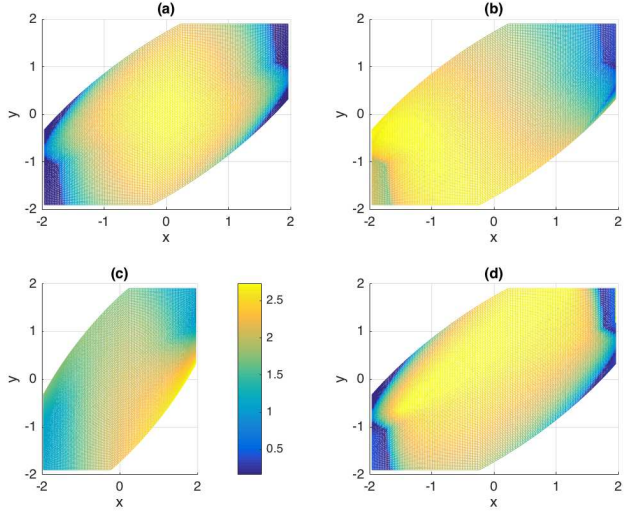}
     \caption{(a) the determinant of $A(x,y)\#B$; (b) the largest eigenvalue of $A(x,y)\#B$; (c) the second largest eigenvalue of $A(x,y)\#B$; (d) the smallest eigenvlue of $A(x,y)\#B$.}\label{fig4}
\end{figure}
\end{example}

\begin{example}
Consider the following positive definite matrix. For sufficiently small $\epsilon>0$,
\begin{equation*}
A=
    \begin{pmatrix}
1.5 & 1 & 1\\
1 & 1 & 1/3+\epsilon\\
1 & 1/3+\epsilon & 1\\
    \end{pmatrix}
\end{equation*}
Setting $a_{23}=0.33333333$, the matrix $A$ will lose positivity. So, arbitrarily small perturbations of a positive definite matrix eject one from the cone of positive definite matrices. Thus, for any positive definite matrix $B$, $A\# B$ will be changed fast as even very small perturbation occurs.
\end{example}

\section{Application in Computer Vision}\label{sec:application}
Let $I$ be a $1$-dimensional intensity or $3$-dimensional color image and $F$ be the feature image extracted from $I$. For a given rectangular region $R\subset F$, let $\{{z}_k\}_{1\leq k\leq m}$ be the $d$-dimensional feature points on $R$. The \emph{Region covariance(RC) descriptor} is the $d \times d$ covariance matrix of the feature points which is defined by
\begin{equation}
{C}_R = \frac{1}{m-1} \sum_{k=1}^m ({z}_k-{\mu})({z}_k-{\mu})^\top,
\end{equation}
where ${\mu}$ is the mean of the points (see \cite{MSR15}).

The RC descriptor has recently become a popular method in several areas such as computer vision and applications of these topics to problems in optimization, machine learning, medical image, and etc \cite{Tuzel2006}.
The RC descriptors are symmetric positive definite matrices which is relatively low-dimensional descriptors extracted from several different features computed at the level of regions. Since a single covariance matrix extracted from a region is usually enough to match the region in different views and poses, RC descriptor consequently reduces the computational cost of classification.
In \cite{KSR14} an image classification scheme based on the generalized geometric mean of positive definite matrices computed from features of all sub-regions in a given medical image, specifically a breast histological image is proposed. Indeed, an image region $R$ can be divided into $n$ small non-overlapping sub-regions $\{R_1,\ldots,R_n\}$ to calculate the corresponding RC descriptors ${C}_{R_k}$, $k=1,\ldots,n$.
Note that the regional covariance descriptors computed from sub-image are points lying on the Riemannian manifold of positive definite matrices. Therefore,
a representative of different RC descriptors calculated from sub-images can be considered as the generalized geometric mean of positive definite matrices.
There are several different symmetric weighted geometric means for positive definite matrices, but we deal with the \emph{Karcher mean} as follows: for a positive probability vector ${\omega} = (w_{1}, \dots, w_{n})$
\begin{equation}\label{def:Karcher}
\Lambda({\omega};{A}_1,\ldots, {A}_n):=\argmin_{{X} \in \mathbb{P}} \sum_{k=1}^n w_{i} \delta^2({X},{A}_i),
\end{equation}
where $\delta(\cdot,\cdot)$ is defined in \eqref{eq:distancePD}.
It is shown that there exists  the unique minimum for the optimization problem \eqref{def:Karcher} if the matrices all lie in a convex ball in a Riemannian manifold (see \cite[section 6.15]{Berger03} and \cite{Kar}).
For more information, see \cite{BH06, Mo}.
Thus, the representative of RC descriptors for sub-regions can then be combined through the generalized geometric mean as $\Lambda({\omega};{C}_{R_1},\ldots, {C}_{R_n})$.

However, missing entries of RC descriptor in practice can occur due to various reasons, such as poor imaging quality or detector noise. Considering missing entries as the zero values or some values possibly reduces precision or encourage such matrix to lose its positivity. Assuming that ${C}_{R_1}(G_1),\ldots, {C}_{R_n}(G_n)$ are partial positive definite matrices with completable graphs $G_1,\dots,G_n$, the representative of RC descriptors for sub-regions with missing entries is the generalized geometric mean of partial positive definite matrices, which is $\Lambda({\omega};{C}_{R_1}(G_1),\ldots, {C}_{R_n}(G_n))$.

Here, we shortly introduce the recent result, called \emph{no dice theorem} \cite{Ho,LP14}, to compute the Karcher mean. For a positive probability vector ${\omega} = (w_{1}, \dots, w_{n})$, we denote
\begin{displaymath}
\displaystyle \overline{{\omega}} := (w_{1}, \ldots, w_{n}, w_{1}, \ldots, w_{n}, \ldots),
\end{displaymath}
and $\displaystyle s(N) := \sum_{i=1}^{N} \overline{\omega}_{i}$ for each $N \in \N$, where $\overline{\omega}_{i}$ is the $i$th component of the infinite-dimensional vector $\overline{{\omega}}$. The sequence of weighted inductive means is defined by
\begin{displaymath}
\displaystyle S_{1} = {A}_{1}, \ S_{N} = {A}_{k} \#_{\frac{s(N-1)}{s(N)}} S_{N-1}
\end{displaymath}
for natural numbers $N \geq 2$, where $k \in \{ 1, \dots, n \}$ is chosen so that $k \equiv N$ (mod $n$). Then
\begin{equation} \label{E:no-dice}
\displaystyle \lim_{N \to \infty} {S}_{N} = \Lambda({\omega}; {A}_{1}, \ldots, {A}_{n}).
\end{equation}
This is the special case of \emph{law of large numbers} on the Hadamard space of positive definite matrices. Using the convergence in \eqref{E:no-dice}, we can find approximately the Karcher mean of partial positive definite matrices to meet our needs.

\section{Final remarks}\label{sec:final_remarks}

We have studied the weighted geometric mean of two partial positive definite matrices including some numerical computation with missing entries. We finally close with some open problems arisen during our study. Let $G$ and $F$ be completable graphs.
\begin{itemize}
\item[(1)] For $A, B \in \mathbb{P}$, set
\begin{align*}
      A_{0} = A,~B_{0} = B,~
A_{n+1} = \left( \frac{A_{n}^{-1} + B_{n}^{-1}}{2} \right)^{-1},~B_{n+1} = \frac{A_{n} + B_{n}}{2}.
\end{align*}
It is known from \cite{LL01} that
\begin{displaymath}
A_{n} \leq A_{n+1} \leq A \# B \leq B_{n+1} \leq B_{n}
\end{displaymath}
for all $n \geq 1$, and the sequences $\{ A_{n} \}$ and $\{ B_{n} \}$ converge monotonically to $A \# B$. For subsets $\mathcal{S}$ and $\mathcal{T}$ of $\mathbb{P}$, we can define the harmonic mean and arithmetic mean such as
\begin{center}
$\displaystyle \left( \frac{\mathcal{S}^{-1} + \mathcal{T}^{-1}}{2} \right)^{-1}$ \ and \ $\displaystyle \frac{\mathcal{S} + \mathcal{T}}{2}$
\end{center}
via the natural definitions of scalar multiplication, sum, and inversion in Section 4. It is questionable that the sequences $\{ \mathcal{S}_{n} \}$ and $\{ \mathcal{T}_{n} \}$ of subsets of $\mathbb{P}$ constructed by the above mean iteration converge to $\mathcal{S} \# \mathcal{T}$. It may be applied to the geometric mean $\mathfrak{p}^{+}[A(G)] \# \mathfrak{p}^{+}[B(F)]$ of partial positive definite matrices $A(G)$ and $B(F)$.

\item[(2)] One can naturally ask the geometric characterization of the geometric mean of partial positive definite matrices. In Theorem \ref{T:property} and Remark \ref{R:property} we have seen that $\mathfrak{p}^{+}[A(G)]$ is nonempty, convex, and bounded. So $\mathfrak{p}^{+}[A(G)] \#_{t} \mathfrak{p}^{+}[B(F)]$ is bounded by Remark \ref{R:bounded}, but it is unknown that $\mathfrak{p}^{+}[A(G)] \#_{t} \mathfrak{p}^{+}[B(F)]$ is convex for $t \in [0,1]$. This is connected with the question in Remark \ref{R:convex}.

\item[(3)] In Theorem \ref{T:order_property} we have seen that $A(G) \leq B(G)$ for partial positive definite matrices $A(G)$ and $B(G)$ implies $\det (\widehat{A}) \leq \det (\widehat{B})$, where $\widehat{A}$ is the maximum determinant positive definite completion of $A(G)$. It naturally occurs that $\widehat{A} \leq \widehat{B}$. If it is true, then Theorem \ref{T:order_property} holds automatically.

\end{itemize}



\bibliographystyle{plain}
\bibliography{references.bib}

\end{document}